\numberwithin{equation}{section}
\newtheorem{theorem}{Theorem}[section]
\newtheorem*{theorem*}{Theorem}
\newtheorem{proposition}[theorem]{Proposition}
\newtheorem*{proposition*}{Proposition}
\newtheorem{lemma}[theorem]{Lemma}
\newtheorem*{lemma*}{Lemma}
\newtheorem{corollary}[theorem]{Corollary}
\newtheorem*{corollary*}{Corollar}
\newtheorem*{fact*}{Fact}
\theoremstyle{definition}
\newtheorem{definition}[theorem]{Definition}
\newtheorem*{definition*}{Definition}
\newtheorem*{claim*}{Claim}
\newtheorem*{conjecture*}{Conjecture}
\newtheorem{problem}[theorem]{Problem}
\newtheorem{example}[theorem]{Example}
\newtheorem*{example*}{Example}
\newtheorem{remark}[theorem]{Remark}
\newtheorem*{remark*}{Remark}
\newtheorem*{note*}{Note}
\newtheorem*{question*}{Question}
\newcommand {\D}{\mathbb D}
\newcommand {\M}{\mathbb M}
\newcommand {\C}{\mathbb C}
\newcommand {\Z}{\mathbb Z}
\newcommand{\eps}{\varepsilon}
\newcommand{\Q}{\mathbb{Q}}
\newcommand{\N}{\mathbb{N}}
\newcommand{\R}{\mathbb{R}}
\newcommand{\Lip}{\mathrm{Lip}}
\newcommand{\diam}{\mathrm{diam}}
\begin{document}

 \title[On the expansiveness of coarse maps between Banach spaces]{On the expansiveness of coarse maps between Banach spaces and geometry preservation}
 \author[B. M. Braga]{Bruno M. Braga}
\address[B. M. Braga]{IMPA, Estrada Dona Castorina 110, 22460-320, Rio de Janeiro, Brazil.}
\email{demendoncabraga@gmail.com}
\urladdr{https://sites.google.com/site/demendoncabraga/}
\thanks{B. M. Braga  was partially supported by FAPERJ (Proc. E-26/200.167/2023) and by CNPq (Proc. 303571/2022-5).}
 
 \author[G. Lancien]{Gilles Lancien}
\address[G. Lancien]{Laboratoire de Math\'ematiques de Besan\c con, Universit\'e de Franche-Comt\'e, CNRS UMR-6623, 16 route de Gray, 25030 Besan\c con C\'edex, Besan\c con, France}
\email{gilles.lancien@univ-fcomte.fr}
\urladdr{https://lmb.univ-fcomte.fr/Lancien-Gilles}

\maketitle

\begin{abstract}
We introduce a new notion of embeddability between Banach spaces. By studying the classical Mazur map, we show that it is strictly weaker than the notion of coarse embeddability. We use the techniques from metric cotype introduced by M. Mendel and A. Naor to prove results about cotype preservation and complete our study of embeddability between $\ell_p$ spaces. We confront our notion with nonlinear invariants introduced by N. Kalton, which are defined in terms of concentration properties for Lipschitz maps defined on countably branching Hamming or interlaced graphs. Finally, we address the problem of the embeddability into $\ell_\infty$.
\end{abstract}

\section{Introduction}

This article deals with a new notion of nonlinear embeddability between Banach spaces and how their geometries are preserved under this new notion. More precisely, the notion considered herein will be large scale in nature and even weaker than the usual  coarse embeddability. Before presenting it, we start by recalling the basics of coarse geometry. Given metric spaces $(X,d)$ and $(Y,\partial)$, and a map $f\colon X\to Y$, one defines a modulus
\[\omega_f(t)=\sup\{\partial(f(x),f(z))\mid d(x,z)\leq t\},\ \text{ for }\ t\geq 0,\]
and call $f$ \emph{coarse} if $\omega_f(t)<\infty$ for all $t\geq 0$. In  words,  $f$ is coarse if it sends bounded sets to bounded sets in a uniform manner. Coarse maps are the usual morphisms considered in the study of the large scale geometry of metric spaces and, in particular, of Banach spaces. In order to deal with embeddings, one defines a modulus 
\[\rho_f(t)=\inf\{\partial (f(x),f(z))\mid d(x,z)\geq t\},\ \text{ for } \ t\geq 0,\]
and call $f$ \emph{expanding} if $\lim_{t\to \infty}\rho_f(t)=\infty$. In words, $f$ is expanding if it sends elements far apart to elements likewise uniformly. The map $f$ is then called a \emph{coarse embedding} if it is both coarse and expanding. Despite its seemingly weak definition, coarse embeddability is known to capture the geometry of Banach spaces in several remarkable ways; to cite a few, we mention the cotype preservation under coarse embeddability into Banach spaces with nontrivial type proved in the seminal paper of M. Mendel and A. Naor (\cite[Theorem 1.11]{MendelNaor2008}) and the preservation of asymptotic-$c_0$-ness$+$reflexivity proved by the second named author together with F. Baudier, P. Motakis, and Th. Schlumprecht (\cite[Theorem A]{BaudierLancienSchlumprecht2018}). 

Functional analysts working in the nonlinear geometry of Banach spaces are interested in knowing the minimal requirements needed for maps between Banach spaces to still generate an interesting notion of embeddability; here the word ``interesting'' should be broadly interpreted as ``it is strictly weaker than a previously studied notion of embeddability but still strong enough to impose geometric restrictions''. For instance, C. Rosendal has started in \cite{Rosendal2017Sigma} with the program of weakening the notion of expansiveness of a coarse map $f$ by properties such as $f$ being \emph{uncollapsed} in the sense that there are $\Delta,\delta>0$ such that 
\[\|x-z\|\geq \Delta\ \text{ implies }\ \|f(x)-f(z)\|>\delta,\]
or $f$ being \emph{solvent}, meaning that  there is an increasing sequence $(R_n)_n$ in $\N$ such that 
\[\|x-z\|\in [R_n,R_n+n]\ \text{ implies }\ \|f(x)-f(z)\|>n.\]
Even maps $f$ satisfying only that 
\[\|x-z\|=\Delta\ \text{ implies }\ \|f(x)-f(z)\|>\delta\]
have already been studied; those are called \emph{almost uncollapsed} (see \cite{Braga2017JFA}). Inspired by a recent work by the two authors (see \cite{BragaLancien2023Equiv}), this paper initiates a yet new approach of weakening the expansiveness condition. Indeed, all the weakenings mentioned above are \emph{not} localized: the positions of $x$ and $z$ in $X$ do not matter, but only the distance $\|x-z\|$. However, in \cite{BragaLancien2023Equiv}, the authors started the study of an equivalence between metric spaces called \emph{asymptotic coarse equivalence} and this takes into account the asymptotic behavior of elements $x$ in $X$ as they approach infinity. In particular, those maps are not necessarily expanding anymore, but only satisfy expansiveness as $x,z\to \infty$. This motivates the main definition of these notes:

\begin{definition}
Let $X$ and $Y$ be Banach spaces and $\alpha\in [0,1]$. A map $f\colon X\to Y$ is called \emph{expanding at rate $\alpha$} if for all $L>0$ there is a map $\rho\colon [0,\infty)\to [0,\infty)$ with $\lim_{t\to \infty}\rho(t)=\infty$ such that 
\[\|x-z\|\geq L\max\{\|x\|^\alpha,\|z\|^\alpha\}+L\ \text{ implies }\ \|f(x)-f(z)\|\geq  \rho(\|x-z\|).\] 
In case $\rho$ can always be chosen to be of the form $\frac{t}{C}-C$ for some $C>0$, we say that $f$ is \emph{linearly expanding at rate $\alpha$}.
\end{definition}

A few comments are in place here. Firstly, notice that a coarse map $X\to Y$ is expanding if and only if it is expanding at rate $0$. Also, we restrict ourselves to $\alpha\leq 1$ since the condition of $\|x-z\|$ being at least of the order of $\max\{\|x\|^\alpha,\|z\|^\alpha\}$ will not happen (up to a bounded subset) if $\alpha>1$.  We   say that a coarse map $f$ has \emph{nontrivial coarse expansion} if it is expanding at rate $\alpha$ for some $\alpha\in [0,1]$. Finally, we recall that if a coarse map $f$ between Banach spaces satisfies $\rho_f(t)\geq \frac{t}{C}-C$ for some $C>0$ and  all $t\geq 0$, then $f$ is called a \emph{coarse Lispchitz embedding}; a stronger notion than coarse embedding. Hence, the notion of a coarse map $f$ being linearly expanding at some rate should be seen as a weakening of $f$ being a coarse Lipschitz embedding.  We now describe the main findings of this paper.

\subsection{Mazur maps and metric cotype}
We first show that the existence of coarse maps which are expanding at rate $\alpha$, for $\alpha\in (0,1]$, is strictly weaker than coarse embeddability. In fact, as it turned out, there are several  well-studied maps which witness that: the Mazur maps. Recall, given $p,q\in [1,\infty)$, the \emph{Mazur map} $M_{p,q}\colon \ell_p\to \ell_q$ is the homogeneous extension of the   canonical map which adjusts elements in the unit sphere of $\ell_p$ so that they fall into the unit sphere of $\ell_q$; for brevity, we postpone to Section \ref{SectionMazur}  its formal definition. While $\ell_p$ coarsely embeds into $\ell_q$ if and only if either $p\in [1,2]$ or $p\leq q$ (\cite[Corollary 7.3]{MendelNaor2008}), we show that $M_{p,q}$ is coarse and has nontrivial expansion as long as $p>q$. Precisely:

\begin{theorem}\label{ThmMazurMapACE}
Let $1\le q<p$ and $\alpha\in ( \frac{p-q}{p},1]$. Then $M_{p,q}\colon \ell_p\to \ell_q$ is a coarse map, which is expanding at rate $\alpha$. Moreover, $M_{p,q}$ is linearly expanding at rate $1$. 
\end{theorem}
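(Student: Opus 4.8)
The plan is to deduce everything from the homogeneity and norm-preservation of the Mazur map together with two classical estimates for its restriction to the unit sphere. Recall that $M_{p,q}$ is positively homogeneous, $M_{p,q}(\lambda x)=\lambda M_{p,q}(x)$ for $\lambda\ge 0$, and that $\|M_{p,q}(x)\|_q=\|x\|_p$ for every $x\in\ell_p$. Since $q<p$, the coordinate map $s\mapsto|s|^{p/q}\mathrm{sign}(s)$ has derivative $\tfrac pq|s|^{p/q-1}$, bounded on $[-1,1]$, so a coordinatewise application of the mean value inequality followed by H\"older's inequality with conjugate exponents $\tfrac{p}{p-q}$ and $\tfrac pq$ yields a constant $B=B(p,q)$ with $\|M_{p,q}(u)-M_{p,q}(v)\|_q\le B\|u-v\|_p$ for all $u,v\in S_{\ell_p}$; dually, since $M_{q,p}=M_{p,q}^{-1}$ on spheres and $M_{q,p}\colon S_{\ell_q}\to S_{\ell_p}$ is $(q/p)$-H\"older, inverting gives $A=A(p,q)>0$ with $\|M_{p,q}(u)-M_{p,q}(v)\|_q\ge A\|u-v\|_p^{p/q}$ for all $u,v\in S_{\ell_p}$ (these are recalled in Section~\ref{SectionMazur}).

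To obtain coarseness I would in fact prove that $M_{p,q}$ is globally Lipschitz. For $x,z\in\ell_p\setminus\{0\}$ with $r:=\|x\|_p\ge\|z\|_p=:s$, write $M_{p,q}(x)-M_{p,q}(z)=r\big(M_{p,q}(x/r)-M_{p,q}(z/s)\big)+(r-s)M_{p,q}(z/s)$. Since $x/r,z/s\in S_{\ell_p}$ and $\|M_{p,q}(z/s)\|_q=1$, the last term has norm $r-s\le\|x-z\|_p$; combining the sphere estimate with $r\|x/r-z/s\|_p\le\|x-z\|_p+(r-s)\le 2\|x-z\|_p$ gives $\|M_{p,q}(x)-M_{p,q}(z)\|_q\le(2B+1)\|x-z\|_p$ (the case $z=0$ being immediate from norm-preservation). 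Hence $\omega_{M_{p,q}}(t)\le(2B+1)t<\infty$ for all $t$, so $M_{p,q}$ is coarse.

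The heart of the matter is a matching lower bound:
\[
\|M_{p,q}(x)-M_{p,q}(z)\|_q\ \ge\ c\,\|x-z\|_p^{p/q}\,\max\{\|x\|_p,\|z\|_p\}^{1-p/q}\qquad\text{for all }x,z\in\ell_p,
\]
with $c=c(p,q)>0$. By homogeneity this reduces to the case $\max\{\|u\|_p,\|v\|_p\}=1$; say $\|u\|_p=1$ and $m:=\|v\|_p\le 1$ (the case $v=0$ being trivial), and set $\hat v:=v/m\in S_{\ell_p}$. I would combine two elementary bounds. First, norm-preservation and the triangle inequality give the radial estimate $\|M_{p,q}(u)-M_{p,q}(v)\|_q\ge\big|\,\|u\|_p-\|v\|_p\,\big|=1-m$. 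Second, inserting $M_{p,q}(\hat v)$ and using $\|M_{p,q}(\hat v)-M_{p,q}(v)\|_q=(1-m)\|M_{p,q}(\hat v)\|_q=1-m$ together with the sphere lower bound gives the angular estimate $\|M_{p,q}(u)-M_{p,q}(v)\|_q\ge A\|u-\hat v\|_p^{p/q}-(1-m)$. Writing $X=1-m$ and $Y=A\|u-\hat v\|_p^{p/q}$, these say $\|M_{p,q}(u)-M_{p,q}(v)\|_q\ge\max\{X,\,Y-X\}\ge\tfrac12\max\{X,Y\}$, while $\|u-v\|_p\le\|u-\hat v\|_p+(1-m)$ forces $\max\{\|u-\hat v\|_p,\,1-m\}\ge\tfrac12\|u-v\|_p$. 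A short case split on whether $1-m\ge\tfrac12\|u-v\|_p$ then gives the claim, using only $\|u-v\|_p\le 2$ and $p/q>1$ to move between the exponents $1$ and $p/q$. I expect this balancing of the radial against the angular contribution---so that the final bound is genuinely of order $\|u-v\|_p^{p/q}$ rather than that minus an uncontrolled error term---to be the single delicate point of the argument.

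Assembling the theorem is then routine. Fix $\alpha\in\big(\tfrac{p-q}{p},1\big]$ and $L>0$, and suppose $\|x-z\|_p\ge L\max\{\|x\|_p,\|z\|_p\}^\alpha+L$; with $r:=\max\{\|x\|_p,\|z\|_p\}$ this forces $r\le(\|x-z\|_p/L)^{1/\alpha}$, and since $1-p/q<0$ the lower bound above yields
\[
\|M_{p,q}(x)-M_{p,q}(z)\|_q\ \ge\ c\,\|x-z\|_p^{p/q}\,r^{1-p/q}\ \ge\ c\,L^{(p/q-1)/\alpha}\,\|x-z\|_p^{\,\beta},\qquad\beta:=\frac pq+\frac{1-p/q}{\alpha}.
\]
An elementary computation shows $\beta>0$ precisely when $\alpha>\tfrac{p-q}{p}$, so $\rho(t):=c\,L^{(p/q-1)/\alpha}\,t^{\beta}$ satisfies $\lim_{t\to\infty}\rho(t)=\infty$ and witnesses that $M_{p,q}$ is expanding at rate $\alpha$. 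Finally, for $\alpha=1$ one gets $\beta=1$, so $\rho$ is linear; after enlarging the constant, $\rho$ can then be taken of the form $t/C-C$, which shows that $M_{p,q}$ is linearly expanding at rate $1$.
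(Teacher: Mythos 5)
Your proposal is correct and follows essentially the same route as the paper: the upper (Lipschitz) bound via the decomposition $M_{p,q}(x)-M_{p,q}(z)=r\bigl(M_{p,q}(x/r)-M_{p,q}(z/s)\bigr)+(r-s)M_{p,q}(z/s)$, and the lower bound via a radial/angular case split combining norm preservation with the H\"older estimate on the sphere, exactly as in Lemma \ref{LemmaMazupMapEstimate}. The only difference is cosmetic: you isolate a scale-invariant inequality $\|M_{p,q}(x)-M_{p,q}(z)\|\ge c\,\|x-z\|^{p/q}\max\{\|x\|,\|z\|\}^{1-p/q}$ and substitute the hypothesis at the end, whereas the paper proves the equivalent estimate directly under the assumption $\|x-y\|\ge\eps\max\{\|x\|^\alpha,\|y\|^\alpha\}$, leading to the same exponent $\frac{p}{q}-\frac{1}{\alpha}\bigl(\frac{p}{q}-1\bigr)$.
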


If $p>2$ and $p>q$, we know from the aforementioned result of M. Mendel and A. Naor that $\ell_p$ does not coarsely embeds into $\ell_q$; in our terminology just introduced, this means that there is no coarse map $\ell_p\to \ell_q$ which is expanding at rate $0$. By Theorem \ref{ThmMazurMapACE}, we are then left to understand what happens for $\alpha$'s in the interval $(0,\frac{p-q}{p}]$. We show, using techniques from metric cotype of \cite{MendelNaor2008}, that there is no such map for all $\alpha$'s in $(0,\frac{p-q}{p}]$, when $2\le q< p<\infty$,  and that there is no such map for all $\alpha$'s in $(0,\frac{p-2}{p}]$, when $1\le q<2<p$ (see Corollary \ref{CorEmblpIntolq}). We refer to Section \ref{SectionCotype} for all relevant definitions. Let us just say for this introduction that, for a Banach space $X$, we denote \[q_X=\inf\{q\in [2,\infty]\mid X \text{ has cotype q}\}.\] We also prove the following general result about cotype preservation, which generalizes \cite[Theorem 1.11]{MendelNaor2008}:

  \begin{theorem}\label{ThmCotypeACE}
 Let $X$ and $Y$ be Banach spaces and suppose $Y$ has nontrivial type. Let $\alpha\in [0,1]$ and suppose there is a coarse map $X\to Y$ which is expanding at rate $\alpha$. Then $q_X\leq \frac{q_Y}{1-\alpha}$.
 \end{theorem}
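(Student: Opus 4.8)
The plan is to adapt the Mendel--Naor metric cotype machinery from \cite[Theorem 1.11]{MendelNaor2008} to the weaker hypothesis of expansiveness at rate $\alpha$. Recall that in the Mendel--Naor argument one works on the discrete torus $\Z_m^n$ (with the $\ell_\infty$-metric, or rather a ``Hamming-type'' metric adapted to their averaging), uses the fact that $Y$ has nontrivial type to obtain a metric cotype inequality for $Y$ with the optimal exponent $q_Y$, and then feeds into it a coarse map $X \to Y$ composed with a linear embedding of a well-chosen finite-dimensional subspace of $X$ into the torus. The obstruction to cotype for $X$ then comes from the contrast between the diagonal term $\|f(x+\tfrac m2 e_j)-f(x)\|$ and the ``edge'' terms $\|f(x+e_j)-f(x)\|$.

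First I would set up the scaling: fix a finite-dimensional subspace $E\subseteq X$ witnessing that $X$ has cotype worse than some target $q$, and consider points of the form $R\cdot v$ for $v$ ranging over a configuration in $E$ of diameter $O(1)$ and $R$ a large scaling parameter to be optimized. For such points $\|R v\|\approx R$, so the condition $\|x-z\|\geq L\max\{\|x\|^\alpha,\|z\|^\alpha\}+L$ becomes $\|Rv - Rw\| = R\|v-w\| \gtrsim L R^\alpha$, i.e. $\|v-w\|\gtrsim L R^{\alpha-1}$, which is satisfied for all the relevant pairs once $R$ is large enough (since $\alpha - 1 \le 0$). This is the key point where rate-$\alpha$ expansiveness gets used: on a ball of radius $R$, the map $f$ behaves like an expanding map for \emph{all} pairs at distance $\gtrsim R^\alpha$ apart, not just those at distance $\gtrsim$ const. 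Then $\rho(\|x-z\|)$ applied to the ``long'' (diagonal) pairs, whose $X$-distance is of order $Rm$, gives a lower bound growing with $Rm$, while the ``short'' (edge) pairs have $X$-distance of order $R$ and are controlled by the coarseness modulus $\omega_f(O(R))$, a fixed finite quantity once $R$ is fixed.

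The heart of the argument is then to run the Mendel--Naor metric cotype inequality on $Y$ (valid because $Y$ has nontrivial type, with cotype exponent arbitrarily close to $q_Y$) for the composition of $f$ with a net/embedding of the scaled configuration in $E$ into $\Z_m^n$, and compare the two sides. The cotype-$q_Y$ inequality for $Y$ will say, roughly, $\sum_j \E\|g(x+\tfrac m2 e_j)-g(x)\|^{q_Y} \lesssim m^{q_Y} n^{1-q_Y/?}\,\E_\eps \E_x \|g(x+\eps)-g(x)\|^{q_Y}$ (up to the precise Mendel--Naor normalization). Plugging $g = f\circ(\text{embedding})$: the left side is bounded below using expansiveness at rate $\alpha$, producing a term like $n\,\rho(Rm)^{q_Y}$; the right side is bounded above by $m^{q_Y}\cdot(\text{net factor})\cdot n \cdot \omega_f(CR)^{q_Y}$ using coarseness plus the fact that $E$ (approximately) embeds linearly into the torus, the approximation degrading as a controlled power of $m$ and $n$. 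Optimizing the free parameters $R, m, n$ — with $R$ chosen as a suitable power of $m$ so that $\rho(Rm)\to\infty$ dominates — forces a relation that, unless $q \le q_Y/(1-\alpha)$, contradicts the assumed poor cotype of $X$; letting the cotype exponent of $Y$ tend to $q_Y$ and $q$ tend to $q_X$ yields $q_X \le q_Y/(1-\alpha)$.

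The main obstacle I anticipate is bookkeeping the interaction between the scaling parameter $R$ (needed to activate rate-$\alpha$ expansiveness) and the torus parameters $m,n$ from the Mendel--Naor inequality: one must choose $R$ growing with $m$ fast enough that the diagonal pairs genuinely satisfy the expansiveness hypothesis \emph{and} $\rho(Rm)$ beats the coarseness term $\omega_f(CR)$ after the inequality is applied, yet the net discretization error introduced by rescaling $E$ into $\Z_m^n$ must not swamp the gain. Getting the exponent to come out \emph{exactly} $q_Y/(1-\alpha)$ (rather than something weaker) is where the precise form of the Mendel--Naor cotype inequality — including its sharp dependence on $n$ — must be used, exactly as in the original proof of \cite[Theorem 1.11]{MendelNaor2008}, with the extra $\frac{1}{1-\alpha}$ factor appearing precisely because distances $\|x\|\approx R$ get ``spent'' at rate $\alpha$ while the useful expansion is read off at the full scale $Rm$.
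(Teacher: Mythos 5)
Your overall strategy (run the Mendel--Naor metric cotype inequality for $Y$, which has nontrivial type, on a map of the discrete torus $\Z_m^n$ pushed into $X$ through uniformly embedded finite-dimensional subspaces, compare diagonal against edge terms, then let the cotype exponent tend to $q_Y$) is indeed the paper's route, but your key step is wrong. You claim that after scaling the configuration by a large factor $R$ the rate-$\alpha$ hypothesis is ``satisfied for all the relevant pairs once $R$ is large enough (since $\alpha-1\le 0$)'', while the edge terms remain ``controlled by $\omega_f(O(R))$, a fixed finite quantity once $R$ is fixed''. These two assertions cannot be combined into a contradiction: if they could, the argument would yield $q_X\le q_Y$ for \emph{every} $\alpha<1$, which is false by Theorem \ref{ThmMazurMapACE} and Corollary \ref{CorEmblpIntolq} (for $2\le q<p$ the Mazur map $M_{p,q}\colon\ell_p\to\ell_q$ is coarse and expanding at every rate $\alpha>\frac{p-q}{p}$, yet $q_{\ell_p}=p>q=q_{\ell_q}$). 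The reason your scheme fails is that when you enlarge the scale to activate the expansion condition for the diagonal pairs, the edge lengths grow with the same scale, so the coarseness term is \emph{not} a fixed constant relative to the diagonal gain; and since the theorem only assumes expansion (so $\rho$ may tend to infinity arbitrarily slowly), a lower bound $\rho(\text{diagonal})$ can never beat an upper bound $\omega_f(\cdot)$ whose argument grows with $n$ (or $R$) --- note $\omega_f(t)\le Kt+K$. Your bookkeeping is also internally inconsistent: a configuration of diameter $O(1)$ scaled by $R$, all of whose points have norm $\approx R$, cannot have diagonal pairs at distance of order $Rm$.

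Where $\frac{1}{1-\alpha}$ actually comes from is different from your heuristic. In the paper's Lemma \ref{LemmaCotypeInequality} one takes $h(x)=n^s\sum_{j}e^{2\pi i x_j/m}e_j\in\ell_r^n(\C)$, with $r=q_X$ (via Maurey--Pisier) and $s=\frac1q-\frac1r$ for $q>q_Y$; then every point has norm $n^{s+1/r}=n^{1/q}$, which is \emph{larger} than the diagonal separation $2n^s$, and the expansion hypothesis applies to the diagonal pairs precisely when $2n^s\ge (n^{s+1/r})^\alpha+1$, i.e.\ when $\alpha\le\frac{rs}{rs+1}=\frac{r-q}{r}$. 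This constraint --- not a comparison of a large scale $Rm$ against $\|x\|^\alpha\approx R^\alpha$ --- is what produces $q_X\le\frac{q_Y}{1-\alpha}$ after taking the infimum over $q>q_Y$. Meanwhile, because $s+\frac1r=\frac1q$ and $m_q(Y,n,\Gamma)\ge n^{1/q}/\Gamma$, the edge lengths $2\pi n^{1/q}/m$ stay bounded by $2\pi\Gamma$, so the right-hand side of the cotype inequality is the $n$-independent constant $\Gamma A\,\omega_f(4\pi\Gamma)$, and the contradiction comes solely from $\rho(2n^s)\to\infty$. There is no free scaling parameter $R$ in the correct argument, and the ``optimization of $R,m,n$'' you defer to cannot be carried out for an arbitrary $\rho\to\infty$; as written, your proposal has a genuine gap at exactly the point where the exponent must emerge.
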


We stress here that our results do more than generalizing \cite[Theorem 1.11]{MendelNaor2008}. Indeed, Theorem \ref{ThmCotypeACE} together with Theorem \ref{ThmMazurMapACE} give us optimal results on cotype preservation  (see Corollary \ref{CorEmblpIntolq}).

\subsection{Embeddings of metric graphs into Banach spaces}
After our study of cotype preservation and the embeddability of the $\ell_p$'s, we turn our attention to the embeddability of certain metric graphs into Banach spaces. Recall, given $k\in\N$, we let $[\N]^k$ denote the set of all subsets of $\N$ with $k$ elements and,  given $\bar n\in[\N]^k$, we   write $\bar n=(n_1,\ldots, n_k)$ where  $n_1<\ldots< n_k$. As initiated by  N. Kalton (see \cite{Kalton2007,Kalton2013AsymptoticStructure}), the study of the embeddability of the sequence $([\N]^k)_k$ (endowed with appropriate metrics) is extremely useful when looking for coarse (or coarse Lipschitz) invariants of Banach spaces. For instance,  given $k\in\N$, let $d_{\mathbb H}=d_{\mathbb H,k}$ denote the \emph{Hamming metric}  on $[\N]^k$, i.e., 
\[d_{\mathbb H}(\bar n, \bar m)=|\{i\in \{1,\ldots, k\}\mid n_i\neq m_i\}|, \ \text{ for }\ \bar n,\bar m\in [\N]^k. \]

The following important concentration property was introduced in \cite{KaltonRandrianarivony2008} and later formalized by A. Fovelle in \cite{Fovelle} in the format presented below:
\begin{definition}\label{DefHCF}
Let $p\in (1,\infty]$.    A Banach space $X$ is said to have \emph{Hamming full concentration property $p$}, abbreviated \emph{HFC}$_p$, if there is $C\geq 1$ such that for all $k\in\N$ and all  $1$-Lipschitz maps $\phi\colon ([\N]^k,d_{\mathbb H})\to X$, there is an infinite $\M\subseteq \N$ such that 
\[\mathrm{diam}(\phi([\M]^k))\leq Ck^{1/p}\]
(here we use the convention $1/\infty=0$ if $p=\infty$). \end{definition}

As shown in \cite[Theorem 4.2]{KaltonRandrianarivony2008}, reflexive spaces with a $p$-asymptotically uniformly smooth renorming have the HFC$_p$; see Example \ref{example HFC} for definitions. Moreover, as shown in \cite[Theorems A and B]{BaudierLancienMotakisSchlumprecht2018}, having HFC$_\infty$ is equivalent to $X$ being asymptotic-$c_0$ and reflexive. We postpone to Example \ref{ExaAsympc_0} the formal definition of asymptotic-$c_0$-ness, for now, we simply say that $X$ has such property if copies of the finite dimensional subspaces of $c_0$ can be found in the finite codimensional subspaces of $X$ in a uniform manner.

It was known that for $p\in (1,\infty)$, HFC$_p$ is stable under coarse-Lipschitz embeddings and that HFC$_\infty$ is even stable under coarse embeddings.  We show that our weaker notions of embeddability are already enough for the HFC$_p$ properties to be preserved in the following sense:

\begin{theorem}\label{ThmPreservationHFCpByLinearExpansion} Let $X$ and $Y$ be Banach spaces and suppose $Y$ has HFC$_p$ for some $p\in (1,\infty]$. 
\begin{enumerate}
    \item\label{ThmPreservationHFCpByLinearExpansionItem1} Suppose $p\in (1,\infty)$. If there is a coarse map $f\colon X\to Y$ which is linearly expanding at rate $1$, then  $X$ must have HFC$_p$.

    \item\label{ThmPreservationHFCpByLinearExpansionItem2} Suppose $p=\infty$. If there is a coarse map $f\colon X\to Y$ which is  expanding at rate $1$, then  $X$ must have HFC$_\infty$.

\end{enumerate}
\end{theorem}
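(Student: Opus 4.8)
The plan is to argue by contradiction: assuming $X$ fails HFC$_p$, I would produce, for each $k$, a $1$-Lipschitz map $\phi_k\colon([\N]^k,d_{\mathbb H})\to X$ all of whose restrictions to sets $[\M]^k$ have diameter $\gtrsim k^{1/p}$ (with the constant tending to infinity as we shrink along a diagonal argument). Composing with the coarse map $f\colon X\to Y$ and rescaling appropriately, I want to feed this into the HFC$_p$ property of $Y$ to get a contradiction. The subtlety — and this is exactly where the weak hypothesis (linearly expanding at rate $1$, rather than coarse-Lipschitz) enters — is that $f\circ\phi_k$ need not be Lipschitz, nor even uncollapsed in the usual sense; it is only expanding for pairs $x,z$ with $\|x-z\|$ large compared to $\max\{\|x\|,\|z\|\}$. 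So I first need to translate the points $\phi_k(\bar n)$ far out to infinity before applying $f$.

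The key device is a translation-plus-rescaling trick. Fix $k$ and let $\psi_k\colon([\N]^k,d_{\mathbb H})\to X$ be a $1$-Lipschitz map with $\mathrm{diam}(\psi_k([\M]^k))\geq \theta_k k^{1/p}$ for every infinite $\M$, where $\theta_k\to\infty$ (such a sequence exists precisely because $X$ fails HFC$_p$: if no single constant $C$ works, a standard diagonalization over $C=\theta_k$ along suitably thinned subsets of $\N$ yields one map per $k$ with the desired blow-up of the constant). Since $[\N]^k$ has diameter $k$ in $d_{\mathbb H}$, the image $\psi_k([\N]^k)$ lies in a ball of radius $k$; pick a vector $w_k\in X$ with $\|w_k\|=R_k$ for a large parameter $R_k$ to be chosen, and set $g_k=f\circ(w_k+\psi_k)$. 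For $\bar n\neq\bar m$ in $[\N]^k$ we have $\|(w_k+\psi_k(\bar n))-(w_k+\psi_k(\bar m))\|=\|\psi_k(\bar n)-\psi_k(\bar m)\|$, while $\|w_k+\psi_k(\bar n)\|\leq R_k+k$. Choosing $R_k$ so that $d_{\mathbb H}(\bar n,\bar m)\geq 1$ forces $\|\psi_k(\bar n)-\psi_k(\bar m)\|$ to exceed $L(R_k+k)+L$ — this requires a lower $1$-Lipschitz-type bound on $\psi_k$, which one does not have for free; so instead I would replace $\psi_k$ by $\mu\psi_k$ for a large scalar $\mu=\mu_k$, compensate by working with $\frac{1}{\mu_k}$-Lipschitz maps (equivalently rescale the target metric at the end), and then make $\|\mu_k\psi_k(\bar n)-\mu_k\psi_k(\bar m)\|$ large relative to $R_k+\mu_k k$ by a careful choice of parameters. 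The cleanest route is: because $f$ is \emph{linearly} expanding at rate $1$ (Item (1)), for each $L$ there is $C$ with $\|u-v\|\geq L\max\{\|u\|,\|v\|\}+L\Rightarrow \|f(u)-f(v)\|\geq \frac{\|u-v\|}{C}-C$; taking $L$ small and scaling $\psi_k$ by a factor depending on $k$ gives, on a uniformly-controlled scale, a genuine lower estimate $\|g_k(\bar n)-g_k(\bar m)\|\geq a_k\,d_{\mathbb H}(\bar n,\bar m)-b_k$ whenever $d_{\mathbb H}(\bar n,\bar m)$ is bounded below, while coarseness of $f$ gives $\|g_k(\bar n)-g_k(\bar m)\|\leq \omega_f(a_k'\,d_{\mathbb H}(\bar n,\bar m))$, i.e. an upper modulus. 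Normalizing $g_k$ to be $1$-Lipschitz (for the Hamming metric) by dividing by its Lipschitz constant $\mathrm{Lip}(g_k)$ and applying HFC$_p$ of $Y$, we find an infinite $\M$ with $\mathrm{diam}(g_k([\M]^k))\leq C_Y\,\mathrm{Lip}(g_k)\,k^{1/p}$. Comparing this upper bound against the lower bound $\mathrm{diam}(g_k([\M]^k))\gtrsim a_k\cdot \mathrm{diam}_{d_{\mathbb H}}$-type quantity coming from the expansion estimate applied to a diametral pair in $\psi_k([\M]^k)$ (whose $\psi_k$-diameter is $\geq\theta_k k^{1/p}$), one extracts, after dividing through, an inequality of the shape $\theta_k \lesssim C_Y\cdot(\text{ratio of scales})$. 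The point of introducing $\theta_k\to\infty$ together with $R_k$ chosen so this ratio stays bounded is that the inequality becomes $\theta_k\lesssim\text{const}$ for all $k$ — a contradiction.

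For part (2), $p=\infty$, the hypothesis is only that $f$ is expanding at rate $1$ (not linearly), and correspondingly HFC$_\infty$ asks for $\mathrm{diam}(\phi([\M]^k))\leq Ck^{1/\infty}=C$, a \emph{bounded} diameter independent of $k$. Here I would run the same contradiction scheme but using that $X$ fails HFC$_\infty$ means: for every $C$ there is $k$ and a $1$-Lipschitz $\psi\colon([\N]^k,d_{\mathbb H})\to X$ with $\mathrm{diam}(\psi([\M]^k))>C$ for all infinite $\M$. Translating by a large $w$ out to infinity (with $\|w\|$ chosen so the $\psi$-diameter — which we can inflate by prescaling $\psi$ and relaxing the Lipschitz constant — dominates $L\|w\|+L$), applying $f$, and then invoking HFC$_\infty$ of $Y$ on the (rescaled) composition, we get an infinite $\M$ on which $f\circ(w+\psi)$ has diameter bounded by a constant depending only on $Y$ and the Lipschitz normalization — but since $f$ is expanding, along $\M$ the diameter of $f\circ(w+\psi)$ is bounded below by $\rho$ of the $\psi$-diameter, which we have arranged to be as large as we like. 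Contradiction. Because no linear lower estimate is available, the numerology is simpler (we only chase "bounded vs.\ unbounded"), which is why rate-$1$ expansion without linearity suffices precisely in the $p=\infty$ case, matching the known fact that HFC$_\infty$ is stable under coarse (not merely coarse-Lipschitz) embeddings.

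The main obstacle I anticipate is the bookkeeping in the first paragraph's rescaling: one must simultaneously (i) push the $\psi_k$-images far from the origin so the rate-$1$ expansion condition $\|u-v\|\geq L\max\{\|u\|,\|v\|\}+L$ is triggered for \emph{all} relevant pairs, (ii) keep the resulting ratio (norm-of-translation)/(scaled-$\psi_k$-diameter) bounded uniformly in $k$ so that the final inequality degenerates to "$\theta_k\leq$ const", and (iii) track how the Lipschitz constant of the composition $f\circ(w_k+\mu_k\psi_k)$ — governed by the coarseness modulus $\omega_f$ on the scale $\mu_k$ — interacts with the $k^{1/p}$ factor in HFC$_p$. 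Getting all three scales to cooperate is the crux; everything else (the diagonalization producing $\theta_k\to\infty$, the straightforward application of HFC$_p$ in $Y$, the contradiction) is routine once the scales are fixed.
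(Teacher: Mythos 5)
Your proposal has a genuine conceptual error at its core: the translation-to-infinity device works against you. For a map that is (linearly) expanding at rate $1$, the expansion is only guaranteed when $\|u-v\|\geq L\max\{\|u\|,\|v\|\}+L$, i.e.\ when the mutual distance \emph{dominates} the norms; the threshold grows linearly with the distance to the origin. Replacing $\psi_k$ by $w_k+\mu_k\psi_k$ with $\|w_k\|=R_k$ large inflates $\max\{\|u\|,\|v\|\}$ to about $R_k$ while the separations stay of order $\mu_k\,\|\psi_k(\bar n)-\psi_k(\bar m)\|\leq \mu_k k$, so the hypothesis of the expansion condition becomes \emph{harder}, not easier, to trigger -- and rescaling by $\mu_k$ cannot repair this, since scaling multiplies norms and separations by the same factor and leaves the relevant ratio unchanged. (Your own requirement (ii), that the ratio of the translation norm to the scaled diameter stay bounded, is in direct tension with the reason you introduced the translation in the first place.) You also correctly note that you lack any pairwise lower bound on $\|\psi_k(\bar n)-\psi_k(\bar m)\|$ -- failure of HFC$_p$ only gives lower bounds on \emph{diameters} over every infinite subset -- but your scheme never supplies one, and without it the comparison with the HFC$_p$ diameter bound in $Y$ (which holds on some further infinite subset $\M'$) cannot be closed.

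The missing idea, which is how the paper proceeds, is a normalization \emph{toward} the origin rather than away from it. One fixes a single large constant $C$ (depending on $\omega_f(1)$, the linear expansion constant, and the HFC$_p$ constant of $Y$; no diagonal sequence $\theta_k\to\infty$ is needed), takes one bad $1$-Lipschitz $\phi\colon([\N]^k,d_{\mathbb H})\to X$, and sets $\lambda=\inf\{k^{-1/p}\diam(\phi([\M]^k))\mid \M\ \text{infinite}\}\geq C$. Choosing $\M$ with $\lambda k^{1/p}\leq\diam(\phi([\M]^k))\leq 2\lambda k^{1/p}$ and recentering $\phi$ so that $\phi([\M]^k)\subseteq 2\lambda k^{1/p}B_X$ controls all norms by $2\lambda k^{1/p}$, while the minimality of $\lambda$ guarantees that every infinite $\D\subseteq\M$ contains $\bar n<\bar m$ with $\|\phi(\bar n)-\phi(\bar m)\|\geq\frac{\lambda}{3}k^{1/p}$; a Ramsey argument upgrades this to \emph{all} pairs $\bar n<\bar m$ in a further infinite subset. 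Thus separations are a universal fraction (about $1/6$) of the norm bound, and since the definition of expansion at rate $1$ allows an arbitrarily small threshold constant (take $L=1/8$ there), the expansion condition is triggered for all such pairs; composing with $f$ (which is $2\omega_f(1)$-Lipschitz on the graph by metric convexity) and applying HFC$_p$ of $Y$ by homogeneity yields an inequality of the form $\frac{C}{3L}\leq 2KC_Y+L$, contradicting the choice of $C$. The $p=\infty$ case runs the same way with $\rho$ in place of the linear lower bound, which is why mere expansion at rate $1$ suffices there; but it too relies on the infimum-plus-recentering-plus-Ramsey step that your sketch does not contain.
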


In particular, applying Theorem \ref{ThmPreservationHFCpByLinearExpansion}\eqref{ThmPreservationHFCpByLinearExpansionItem2} together with the characterization of HFC$_\infty$ mentioned above (\cite[Theorems  B]{BaudierLancienMotakisSchlumprecht2018}), yields immediately the following corollary.

\begin{corollary}\label{CorollaryHFCInfty}
 If a Banach space $X$ can be mapped by a coarse map which is also expanding at rate $1$ into  a reflexive Banach space which is   asymptotic-$c_0$, then $X$ must be also  reflexive and  asymptotic-$c_0$.
\end{corollary}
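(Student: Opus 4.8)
The plan is to deduce this purely formally from Theorem~\ref{ThmPreservationHFCpByLinearExpansion}\eqref{ThmPreservationHFCpByLinearExpansionItem2} and the characterization of HFC$_\infty$ recorded above (namely \cite[Theorem B]{BaudierLancienMotakisSchlumprecht2018}, which states that a Banach space has HFC$_\infty$ if and only if it is reflexive and asymptotic-$c_0$). So no genuinely new argument is needed; the work has already been done in proving Theorem~\ref{ThmPreservationHFCpByLinearExpansion}.

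Concretely, I would argue in three steps. First, let $Y$ be the reflexive asymptotic-$c_0$ Banach space in the statement; by the cited characterization, $Y$ has HFC$_\infty$. Second, by hypothesis there is a coarse map $f\colon X\to Y$ which is expanding at rate $1$, so Theorem~\ref{ThmPreservationHFCpByLinearExpansion}\eqref{ThmPreservationHFCpByLinearExpansionItem2} applies with $p=\infty$ and yields that $X$ itself has HFC$_\infty$. Third, invoking the converse direction of the same characterization, $X$ being a Banach space with HFC$_\infty$ forces $X$ to be reflexive and asymptotic-$c_0$, which is exactly the conclusion.

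Since this is a one-line deduction, there is essentially no obstacle to overcome here: the only point worth a word of care is that the equivalence ``HFC$_\infty \Leftrightarrow$ reflexive $+$ asymptotic-$c_0$'' is used in \emph{both} directions — once for the target space $Y$ to put ourselves in a position to apply Theorem~\ref{ThmPreservationHFCpByLinearExpansion}, and once for the domain $X$ to translate the conclusion back into the language of reflexivity and asymptotic-$c_0$-ness. The substantive content, i.e.\ that a coarse map which is merely expanding at rate $1$ (rather than a coarse embedding) already transfers HFC$_\infty$, lies entirely in Theorem~\ref{ThmPreservationHFCpByLinearExpansion}\eqref{ThmPreservationHFCpByLinearExpansionItem2} and is assumed here.
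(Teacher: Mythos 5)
Your proposal is correct and coincides with the paper's own proof: both apply the equivalence of HFC$_\infty$ with reflexivity plus asymptotic-$c_0$-ness (\cite[Theorem B]{BaudierLancienMotakisSchlumprecht2018}) in both directions, combined with Theorem~\ref{ThmPreservationHFCpByLinearExpansion}\eqref{ThmPreservationHFCpByLinearExpansionItem2}. Nothing further is needed.
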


We also study interlacing pairs in Hamming graphs in Subsection \ref{SubsectionInterlacingHamming} and use this to obtain results about the embeddability of the James spaces $J_p$ (Example \ref{ExampleJamesSpace}); see Theorem \ref{ThmInterlacingHamming} and Corollary \ref{CorInterlacingHammingJames} for details. 

\medskip
Another important metric we can endow each $[\N]^k$ with is the \emph{interlaced metric}: we set distinct elements $\bar n,\bar m\in [\N]^k$ to be adjacent if either 
\[n_1\leq m_2\leq \ldots\leq n_k\leq m_k\ \text{ or }\ m_1\leq n_1\leq \ldots\leq m_k\leq n_k\]
and then we let $d_{\mathbb I}=d_{\mathbb I,k}$ be the shortest path metric on $[\N]^k$ given by this graph structure. The study of those metric spaces was fundamental for N. Kalton to rule out the coarse embeddability of $c_0$ into reflexive spaces and gave rise to the so-called \emph{property $\mathcal Q$'s}:

\begin{definition}\label{DefiPropQ}
Let $p\in (1,\infty]$.    A Banach space $X$ is said to have \emph{property $\mathcal Q_p$}, if there is $C\geq 1$ such that for all $k\in\N$ and all  $1$-Lipschitz maps $\phi\colon ([\N]^k,d_{\mathbb I})\to X$, there is an infinite $\M\subseteq \N$ such that 
\[\mathrm{diam}(\phi([\M]^k)\leq Ck^{1/p}\]
(here we use the convention $1/\infty=0$ if $p=\infty$). If $p=\infty$, we simply say $X$ has \emph{property $\mathcal Q$}. \end{definition}

We prove that our weakenings of coarse embeddability are also strong enough to ensure the preservation of property $\mathcal Q_p$. Precisely, we prove the following:

\begin{theorem}\label{ThmPreservationPropertyQpByLinearExpansion} Let $X$ and $Y$ be Banach spaces and suppose  $Y$ has property $\mathcal Q_p$ for some $p\in (1,\infty]$.  

\begin{enumerate}
    \item\label{ThmPreservationPropertyQpByLinearExpansionItem1} Suppose $p\in (1,\infty)$. If there is a coarse map $f\colon X\to Y$ which is linearly expanding at rate $1$, then  $X$ must have property $\mathcal Q_p$.

    \item\label{ThmPreservationPropertyQpByLinearExpansionItem2} Suppose $p=\infty$. If there is a coarse map $f\colon X\to Y$ which is  expanding at rate $1$, then  $X$ must have property $\mathcal Q_\infty$, i.e., property $\mathcal Q$.
\end{enumerate}
\end{theorem}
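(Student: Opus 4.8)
The plan is to mirror the proof of Theorem~\ref{ThmPreservationHFCpByLinearExpansion}, replacing the Hamming metric by the interlaced metric throughout. Suppose $Y$ has property $\mathcal Q_p$ with constant $C$, and let $f\colon X\to Y$ be coarse and (linearly) expanding at rate $1$. Fix $k\in\N$ and a $1$-Lipschitz map $\phi\colon([\N]^k,d_{\mathbb I})\to X$; we must locate an infinite $\M\subseteq\N$ on which $\phi$ has image of diameter controlled by $k^{1/p}$ (resp. uniformly bounded when $p=\infty$). The obstruction to directly composing $f\circ\phi$ and applying property $\mathcal Q_p$ of $Y$ is that $f$ is only expanding at rate $1$, so it controls $\|x-z\|$ from below by $\|f(x)-f(z)\|$ only when $\|x-z\|$ is large compared to $\max\{\|x\|,\|z\|\}$. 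The standard fix, which already appears in the rate-$1$ setting, is to \emph{dilate the graph}: for a parameter $N\in\N$ one considers the rescaled map $\phi_N\colon([N\cdot\N]^k,d_{\mathbb I})\to X$, or equivalently composes $\phi$ with the map sending $\bar n$ to $N\bar n$, and normalizes by a suitable factor. Because $\phi$ is $1$-Lipschitz, the image $\phi([\N]^k)$ lies in a ball whose radius grows only linearly in $k$ (the diameter of $([\N]^k,d_{\mathbb I})$ restricted to any finite piece being controlled), so by translating we may assume $\phi(\bar e)=0$ for a fixed base point $\bar e$, and then all relevant points $x=\phi(\bar n)$, $z=\phi(\bar m)$ with $d_{\mathbb I}(\bar n,\bar m)$ small still have $\max\{\|x\|,\|z\|\}$ bounded by a quantity independent of the rescaling parameter.

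Concretely, the key steps are as follows. First, reduce to finite graphs: it suffices to show that for every $\eps>0$ and every finite subset, after passing to an infinite subset, adjacent vertices in $[\M]^k$ have $\phi$-images at distance at most (roughly) $C k^{1/p}$ up to $\eps$; a diagonal/Ramsey argument then upgrades the control on adjacent pairs to control on the full diameter of $\phi([\M]^k)$ — here one uses that the $d_{\mathbb I}$-diameter of $[\M]^k$ is uniformly $O(1)$ edges (indeed, bounded by $k$), exactly as in Kalton's original arguments. Second, fix such a finite piece, translate so that a base vertex maps to $0$, and define $\psi=f\circ(\text{scaling by }N)\circ\phi$ up to normalization: the scaling by $N$ makes $d_{\mathbb I}$-distances between the images large, while $\|x\|,\|z\|$ stay bounded by a constant $\kappa$ depending only on $\omega_\phi$ and $k$ but not on $N$; choosing $N$ large enough (depending on $\kappa$, $k$, and the constant $L$ from the expansion-at-rate-$1$ hypothesis) forces the inequality $\|x-z\|\geq L\max\{\|x\|,\|z\|\}+L$ to hold for all pairs we care about, so $f$ behaves expansively there. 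Third, note that $\psi$ is Lipschitz (with constant controlled by $N$ and $\omega_f$), so after renormalizing by its Lipschitz constant we may apply property $\mathcal Q_p$ of $Y$ to obtain an infinite $\M$ with $\mathrm{diam}(\psi([\M]^k))\leq C' k^{1/p}$; in case~\eqref{ThmPreservationPropertyQpByLinearExpansionItem2} the exponent is $0$ and one only needs $f$ expanding at rate $1$ (not linearly), because the conclusion is merely a uniform bound rather than a linear estimate, and one argues by contradiction — if the diameter of $\phi([\M]^k)$ were unbounded in $k$ one produces points arbitrarily far apart whose $f$-images stay bounded, contradicting that $\rho(\|x-z\|)\to\infty$.

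Fourth and last, transfer the bound back: since $f$ is (linearly) expanding at rate $1$, from $\|f(x)-f(z)\|$ bounded we recover $\|x-z\|=\|\,(\text{scaled})\,\phi(\bar n)-(\text{scaled})\,\phi(\bar m)\|$ bounded by $C'' k^{1/p}$ (resp. bounded, in the $p=\infty$ case), for all $\bar n,\bar m\in[\M]^k$; dividing by the scaling factor $N$ and unwinding the normalization yields $\mathrm{diam}(\phi([\M]^k))\leq C''' k^{1/p}$ (resp. $\leq C'''$), which is exactly property $\mathcal Q_p$ for $X$. The arithmetic bookkeeping — tracking how the scaling factor $N$, the radius $\kappa$ of the relevant ball, and the Lipschitz constant of $\psi$ all interact so that the final constant is independent of $k$ — is the only genuinely delicate point, and it is handled exactly as in the HFC$_p$ case of Theorem~\ref{ThmPreservationHFCpByLinearExpansion}; the passage from Hamming to interlaced metric changes none of this, since the only properties of the metric used are that it is a graph metric with a base point, that $1$-Lipschitz maps have image contained in a ball of radius linear in $k$, and that finite pieces have $d_{\mathbb I}$-diameter $O(k)$. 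I therefore expect the main obstacle to be purely notational: setting up the rescaling and the base-point normalization carefully enough that the ``$\max\{\|x\|,\|z\|\}$ is bounded independently of $N$'' claim is transparent.
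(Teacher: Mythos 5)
Your central device --- dilating by a factor $N$ so that ``the inequality $\|x-z\|\geq L\max\{\|x\|,\|z\|\}+L$ holds for all pairs we care about'' --- does not work at rate $1$, and this is precisely the difficulty the theorem has to overcome. If you replace $\phi$ by $N\phi$ (after translating a base vertex to $0$), then the distances $\|N\phi(\bar n)-N\phi(\bar m)\|$ grow linearly in $N$, but so do the norms $\|N\phi(\bar n)\|$, $\|N\phi(\bar m)\|$; your claim that $\max\{\|x\|,\|z\|\}$ stays bounded by a constant $\kappa$ independent of $N$ is false, and the ratio $\|x-z\|/\max\{\|x\|,\|z\|\}$ is unchanged by the dilation. (Composing with $\bar n\mapsto N\bar n$ in the index set changes nothing at all, since it is a graph isomorphism of $([\N]^k,d_{\mathbb I})$.) Rescaling defeats the condition $\|x-z\|\geq L\max\{\|x\|,\|z\|\}^\alpha+L$ only when $\alpha<1$ (this is exactly what is exploited in Proposition~\ref{PropositionlInfty}); at $\alpha=1$ the multiplicative term scales along with the distance, so no choice of $N$ makes the expansion hypothesis applicable to an arbitrary pair. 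A secondary problem: your proposed reduction to controlling \emph{adjacent} pairs is vacuous, since $\phi$ is $1$-Lipschitz and adjacent vertices are at graph distance $1$; the concentration properties are statements about the diameter of the image on $[\M]^k$, and adjacent-pair control plus diameter $\leq k$ of the graph would only yield a bound of order $k\cdot k^{1/p}$.

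The paper's proof uses a different mechanism, the same one as for Theorem~\ref{ThmPreservationHFCpByLinearExpansion}: argue by contradiction, let $\lambda$ be the infimum of $k^{-1/p}\diam(\phi([\M]^k))$ over infinite $\M$, and pick $\M$ with $\lambda k^{1/p}\leq\diam(\phi([\M]^k))<2\lambda k^{1/p}$; after a translation the whole image lies in $2\lambda k^{1/p}B_X$, and by minimality of $\lambda$ together with Ramsey's theorem one may assume $\|\phi(\bar n)-\phi(\bar m)\|\geq\frac{\lambda}{3}k^{1/p}$ for \emph{all} $\bar n<\bar m$ in $[\M]^k$. The point is that the separation is then a fixed fraction of the radius of the ball containing the image, so the rate-$1$ expansion hypothesis, instantiated with the small parameter $\frac18$ (recall the definition quantifies over all $L>0$), applies directly --- no rescaling --- and gives $\|f(\phi(\bar n))-f(\phi(\bar m))\|\geq\frac{\lambda}{3L}k^{1/p}-L$; comparing with the bound $2KC_Yk^{1/p}$ coming from property $\mathcal Q_p$ of $Y$ applied to the $2K$-Lipschitz map $f\circ\phi$ yields the contradiction once the initial constant $C$ is chosen large. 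Without this self-improvement step (minimal $\lambda$ plus Ramsey) you have no way to bring the rate-$1$ hypothesis into play, so as written your argument has a genuine gap; the $p=\infty$ case is the same argument with $\rho$ in place of the linear lower bound, which is why mere expansion at rate $1$ suffices there.
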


In Section \ref{SectionlInfty}, we characterize Lipschitz embeddability into $\ell_\infty$ in terms of our new notion of embeddability;  this extends a result of N. Kalton (see \cite[Theorem 5.3]{Kalton2011FundMath}). Precisely, we prove the following.
 
\begin{theorem}\label{ThmlInfty}
The following are equivalent for a Banach space $X$:
\begin{enumerate}
\item\label{ThmlInftyItem1} $X$ Lipschitzly embeds into $\ell_\infty$.
\item\label{ThmlInftyItem2} There is a  coarse map $f\colon X\to \ell_\infty$ which is linearly expanding at rate $\alpha$, for some $\alpha\in (0,1)$.
\end{enumerate}
\end{theorem}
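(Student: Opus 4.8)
The implication \eqref{ThmlInftyItem1}$\Rightarrow$\eqref{ThmlInftyItem2} is immediate: a bi-Lipschitz embedding $f\colon X\to\ell_\infty$, say with $a\|x-z\|\le\|f(x)-f(z)\|_\infty\le b\|x-z\|$ for some $0<a\le b$, is coarse, and for every $L>0$ the function $\rho(t)=t/C-C$ with $C=1/a$ witnesses that $f$ is linearly expanding at rate $\alpha$ (for any $\alpha\in(0,1)$). So the content lies in \eqref{ThmlInftyItem2}$\Rightarrow$\eqref{ThmlInftyItem1}, where the plan is to build an honest bi-Lipschitz embedding directly from the given map $f\colon X\to\ell_\infty$, combining three moves: a coordinatewise Lipschitzification; a rescaling by a sequence $r_k\to\infty$, which kills the ``$\max\{\|x\|^\alpha,\|z\|^\alpha\}$'' slack because $\alpha<1$ forces $r^{\alpha-1}\to0$; and a dyadic rescaling to pass from the unit ball $B_X$ to all of $X$. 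Everything stays inside $\ell_\infty$ because a countable $\ell_\infty$-sum of copies of $\ell_\infty$ is again $\ell_\infty$; this is also where using $\ell_\infty$ (rather than an arbitrary target) is essential.

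First I would normalize $f(0)=0$. Since $X$ is a normed space, chaining along line segments gives $\omega_f(t)\le A(t+1)$ with $A:=\omega_f(1)<\infty$; hence, writing $f=(f_n)_n$, each coordinate satisfies $|f_n(x)-f_n(z)|\le A\|x-z\|+A$. Replace $f_n$ by its inf-convolution
\[g_n(x):=\inf_{y\in X}\bigl(f_n(y)+(A+1)\|x-y\|\bigr).\]
Using $f_n(y)\ge f_n(x)-A\|x-y\|-A$ one checks that the infimum is finite, that $f_n(x)-A\le g_n(x)\le f_n(x)$, and that $g_n$ is $(A+1)$-Lipschitz. Thus $g:=(g_n)_n\colon X\to\ell_\infty$ is $(A+1)$-Lipschitz, $\|g(x)-f(x)\|_\infty\le A$ for all $x$, and $\|g(0)\|_\infty\le A$; in particular $\|g(x)-g(z)\|_\infty\ge\|f(x)-f(z)\|_\infty-2A$. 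Applying the hypothesis with $L=1$ yields $C\ge1$ with
\[\|x-z\|\ge\max\{\|x\|^\alpha,\|z\|^\alpha\}+1\ \Longrightarrow\ \|g(x)-g(z)\|_\infty\ge\frac{\|x-z\|}{C}-C',\]
where $C':=C+2A$.

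The crucial step is a rescaling estimate on $B_X$. Fix any increasing sequence $r_k\to\infty$ of positive reals. Let $x,z\in B_X$ and put $\delta=\|x-z\|$. Since $\|x\|^\alpha,\|z\|^\alpha\le1$, multiplying $\delta\ge r^{\alpha-1}+r^{-1}$ by $r$ gives $\|rx-rz\|\ge r^\alpha+1\ge\max\{\|rx\|^\alpha,\|rz\|^\alpha\}+1$, so the displayed implication applies to $rx,rz$ and gives $\tfrac1r\|g(rx)-g(rz)\|_\infty\ge\delta/C-C'/r$. As $\alpha<1$, the map $r\mapsto r^{\alpha-1}+r^{-1}$ strictly decreases to $0$, so both ``$\delta\ge r^{\alpha-1}+r^{-1}$'' and ``$C'/r\le\delta/(2C)$'' hold for all $r$ past some threshold $T(\delta)<\infty$; choosing $k$ with $r_k\ge T(\delta)$ gives $\tfrac1{r_k}\|g(r_kx)-g(r_kz)\|_\infty\ge\delta/(2C)$. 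Now, for $j\in\N\cup\{0\}$ and $k\in\N$, set
\[\Theta_{j,k}(x)=\frac{2^{j}}{r_k}\bigl(g(2^{-j}r_k x)-g(0)\bigr),\qquad x\in X,\]
and let $\Theta=(\Theta_{j,k})_{j,k}\colon X\to\ell_\infty(\N\times\N,\ell_\infty)\cong\ell_\infty$. Each $\Theta_{j,k}$ is $(A+1)$-Lipschitz with $\Theta_{j,k}(0)=0$ and $\|\Theta_{j,k}(x)\|_\infty\le(A+1)\|x\|$, so $\Theta$ is well defined and $(A+1)$-Lipschitz; and for $x,z\in X$, choosing $j$ with $2^{-j}x,2^{-j}z\in B_X$ and applying the rescaling estimate to that pair produces a $k$ with $\|\Theta_{j,k}(x)-\Theta_{j,k}(z)\|_\infty\ge\|x-z\|/(2C)$. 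Hence $\Theta$ is a bi-Lipschitz embedding of $X$ into $\ell_\infty$, which finishes the proof.

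The main obstacle is precisely the rescaling estimate, and it is where both hypotheses are genuinely used: one needs $\alpha<1$, so that $r^{\alpha-1}\to0$ makes the annular slack negligible after dilating $B_X$ by a large factor, and one needs the \emph{linear} form $\rho(t)=t/C-C$ of the expansion modulus, since dilation replaces $\rho$ by $\tfrac1r\rho(r\,\cdot\,)$ and only a linear $\rho$ survives this with a lower bound still proportional to the distance (a sublinear $\rho$, e.g.\ logarithmic, would collapse to $0$). A secondary point to watch is that the coordinatewise inf-convolution is a feature specific to $\ell_\infty$-valued maps, and that the countably many auxiliary maps $\Theta_{j,k}$ must be bounded uniformly in $(j,k)$ so as to assemble into a single $\ell_\infty$-valued map — which is exactly why one subtracts $g(0)$ in the definition of $\Theta_{j,k}$.
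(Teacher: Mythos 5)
Your argument is correct and follows essentially the same two-step strategy as the paper: first replace the coarse map by a Lipschitz map at uniformly bounded distance from it (the paper does this via a net and the Lipschitz extension property of $\ell_\infty$, you via a coordinatewise inf-convolution, which is just as valid since it exploits the same coordinatewise structure of $\ell_\infty$), and then exploit the linear expansion together with $\alpha<1$ through a countable family of dilations assembled into $\ell_\infty(\ell_\infty)\cong\ell_\infty$. Your map $\Theta=(\Theta_{j,k})_{j,k}$ built from the scales $2^{-j}r_k$ (with the subtraction of $g(0)$ to keep the coordinates uniformly bounded) plays exactly the role of the map $F(x)=(q^{-1}f(qx))_{q\in\Q_+}$ in Proposition \ref{PropositionlInfty}, so the two proofs differ only in these technical implementations.
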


 
\section{Revisiting the Mazur map}\label{SectionMazur}

The purpose of this section is to prove  some estimates for the classic Mazur map (see Lemma \ref{LemmaMazupMapEstimate}) and to deduce the proof of Theorem \ref{ThmMazurMapACE}. Recall,  the Mazur map between the unit spheres of two Lebesgue sequence spaces, say $\ell_p$ and $\ell_q$, is the canonical map which adjusts  $p$-summable sequences so that they are $q$-summable. Precisely, given a Banach space $X$, $B_X$ denotes its closed unit ball and $\partial B_X$ its unit sphere. Then, given $p,q\in [1,\infty)$, the \emph{Mazur map} is the map $M_{p,q}\colon \partial B_{\ell_p}\to\partial  B_{\ell_q}$ given by 
\[M_{p,q}((x_n)_n)=(\mathrm{sign}(x_n)|x_n|^{p/q})_n\]
for all $(x_n)_n\in \partial B_{\ell_p}$. It is evident that $M_{p,q}$ is a bijection with inverse $M_{q,p}$. Moreover,  it is well-known that this map is a uniform equivalence. Precisely, the following estimates hold (see \cite[Theorem 9.1]{BenyaminiLindenstraussBook} for a proof and \cite{MazurStudia1929} for its first appearance).

\begin{theorem}
[Mazur Map]
Let $p,q\in [1,\infty)$ with $q<p$ and let $M_{p,q}\colon \ell_p\to \ell_q$ denote the Mazur map. There is a constant $C=C(p,q)>0$ such that 
\[C\|x-y\|^{p/q}\leq \|M_{p,q}(x)-M_{p,q}(y)\|\leq \frac{p}{q}\|x-y\|\]
for all $x,y\in \partial B_{\ell_p}$.\label{ThmMazurMap}
\end{theorem}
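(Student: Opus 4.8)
The plan is to prove the two inequalities by entirely separate routes: the right-hand (Lipschitz-type) bound via an integral representation together with convexity of the norm, and the left-hand (Hölder-type) bound by inverting the map.

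\emph{The upper bound.} Since $q<p$, the scalar function $\phi(s)=\mathrm{sign}(s)|s|^{p/q}$ is of class $C^1$ on all of $\R$, with $\phi'(s)=\frac pq|s|^{p/q-1}$ (and $\phi'(0)=0$). Fixing $x,y\in\partial B_{\ell_p}$ and setting $z(t)=(1-t)y+tx$ for $t\in[0,1]$, the fundamental theorem of calculus applied coordinatewise gives $M_{p,q}(x)_n-M_{p,q}(y)_n=\frac pq\int_0^1|z(t)_n|^{p/q-1}(x_n-y_n)\,dt$. I would then compute $\|M_{p,q}(x)-M_{p,q}(y)\|_q$ by duality, testing against $w$ in the unit ball of $\ell_{q'}$ (with $1/q+1/q'=1$): after interchanging the sum and the integral, the inner sum $\sum_n w_n(x_n-y_n)|z(t)_n|^{p/q-1}$ is estimated by the generalized Hölder inequality with the three conjugate exponents $q'$, $p$ and $\frac{pq}{p-q}$, which indeed satisfy $\frac1{q'}+\frac1p+\frac{p-q}{pq}=1$; this produces the factor $\|w\|_{q'}\,\|x-y\|_p\,\|z(t)\|_p^{(p-q)/q}$. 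The decisive point is that convexity of the $\ell_p$-norm forces $\|z(t)\|_p\le(1-t)\|y\|_p+t\|x\|_p=1$, so this last factor is harmless; integrating in $t$ and taking the supremum over $w$ yields $\|M_{p,q}(x)-M_{p,q}(y)\|_q\le\frac pq\|x-y\|_p$.

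\emph{The lower bound.} Rather than arguing directly, I would use that $M_{q,p}$ is the inverse of $M_{p,q}$ on the spheres. It then suffices to establish the Hölder-type upper estimate $\|M_{q,p}(u)-M_{q,p}(v)\|_p\le 2^{1-q/p}\|u-v\|_q^{q/p}$ for $u,v\in\partial B_{\ell_q}$: applying it to $u=M_{p,q}(x)$ and $v=M_{p,q}(y)$ gives $\|x-y\|_p\le 2^{1-q/p}\|M_{p,q}(x)-M_{p,q}(y)\|_q^{q/p}$, which rearranges into the claimed bound with $C=2^{-(p-q)/q}$. The estimate for $M_{q,p}$ is purely pointwise: with $\theta=q/p\in(0,1)$ one checks the scalar inequality $\bigl|\mathrm{sign}(a)|a|^\theta-\mathrm{sign}(b)|b|^\theta\bigr|\le 2^{1-\theta}|a-b|^\theta$ for all $a,b\in\R$ — the same-sign case is subadditivity of $s\mapsto s^\theta$, and the opposite-sign case reduces by homogeneity to the elementary maximization of $s^\theta+(1-s)^\theta$ over $s\in[0,1]$ — and then one raises to the power $p$ and sums over $n$, using $\theta p=q$.

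\emph{Remarks on difficulty.} The routine bookkeeping is to run everything first on the finite-dimensional spheres $\partial B_{\ell_p^n}$ and then pass to the limit, with Fubini--Tonelli justifying the coordinatewise use of the fundamental theorem of calculus and the interchange of sum and integral. The only genuinely subtle point is the sharp constant $\frac pq$: a crude application of Hölder to the pointwise bound $|\phi(a)-\phi(b)|\le\frac pq|a-b|\max(|a|,|b|)^{p/q-1}$ (with exponents $p/q$ and $p/(p-q)$, using $\|x\|_p=\|y\|_p=1$) already gives an inequality of the right shape but with a spurious constant like $2^{(p-q)/(pq)}$, and removing it is precisely what the integral-plus-convexity argument above accomplishes; a reader content with any $C'=C'(p,q)$ on the right may skip that refinement. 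This is, in substance, the argument of \cite[Theorem 9.1]{BenyaminiLindenstraussBook}.
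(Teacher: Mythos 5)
Your proof is correct: the duality-plus-convexity argument for the Lipschitz upper bound with constant $\frac{p}{q}$, and the inversion via $M_{q,p}$ together with the scalar inequality $\bigl|\mathrm{sign}(a)|a|^{\theta}-\mathrm{sign}(b)|b|^{\theta}\bigr|\le 2^{1-\theta}|a-b|^{\theta}$ for the H\"older lower bound, both check out (with $C=2^{-(p-q)/q}$). The paper does not prove this statement itself but cites \cite[Theorem 9.1]{BenyaminiLindenstraussBook}, and your argument is in substance that standard proof, so there is nothing further to reconcile.
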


As it is  usually done, we extend the map $M_{p,q}$ to the whole $\ell_p$ by homogeneity; by a abuse of notation, we still denote this extension by $M_{p,q}$. Precisely, for each $x\in \ell_p$, we let 
\[M_{p,q}(x)=\left\{\begin{array}{ll}
\|x\|M_{p,q}\Big(\frac{ x}{\| x\|}\Big),& \text{ if }\ x\neq 0,\\
0,& \text{ if }\ x=0.
\end{array}   \right.\]
We emphasize some important properties of the Mazur map below:
\begin{itemize}
\item    $M_{p,q}^{-1}=M_{q,p}$ for all $p,q\in [1,\infty)$, 
\item $\|M_{p,q}(x)\|=\|x\|$ for all $x\in \ell_p$, and
\item $M_{p,q}\Big(\frac{x}{\|x\|}\Big)=\frac{M_{p,q}(x)}{\|M_{p,q}(x)\|}$ for all $x\in \ell_p\setminus \{0\}$. 
\end{itemize}  
 
The following is the main technical result of this section.
 
\begin{lemma}\label{LemmaMazupMapEstimate}
Let $p,q\in [1,\infty)$ with $q<p$. 
\begin{enumerate}
\item\label{Item1PropMazupMapEstimate} For all $x,y\in \ell_p$, we have   
\[\|M_{p,q}(x)-M_{p,q}(y)\|\leq \Big(\frac{2p}{q}+1\Big)\|x-y\|.\]


\item\label{Item3PropMazupMapEstimate} For all $\eps>0$ and all $\alpha\in (0,1]$, there is $L=L(p,q,\eps,\alpha)>0$ so that, for all  $x,y\in \ell_p$ with   $\|x-y\|\geq \eps \max\{\|x\|^\alpha,\|y\|^\alpha\}$, we have  
\[\frac{1}{L}\|x-y\|^{\frac{p}{q}-\frac{1}{\alpha}\big(\frac{p}{q}-1\big)}\leq \|M_{p,q}(x)-M_{p,q}(y)\|.\]
\end{enumerate}
\end{lemma}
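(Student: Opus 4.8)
The plan is to reduce everything to the sphere estimates of Theorem \ref{ThmMazurMap} together with the homogeneity of the extended Mazur map, treating separately the ``comparable norms'' regime and the ``very different norms'' regime. For part \eqref{Item1PropMazupMapEstimate}, I would assume without loss of generality that $\|x\|\geq \|y\|$; if $\|x\|=0$ both sides vanish, so take $\|x\|>0$. Write $u=x/\|x\|$, $v=y/\|y\|$ (assuming $y\neq 0$; the case $y=0$ is immediate since then $\|M_{p,q}(x)-M_{p,q}(y)\|=\|x\|=\|x-y\|$). The idea is the telescoping
\[M_{p,q}(x)-M_{p,q}(y)=\|x\|\big(M_{p,q}(u)-M_{p,q}(v)\big)+(\|x\|-\|y\|)M_{p,q}(v).\]
The second term has norm $\big|\|x\|-\|y\|\big|=\|x\|-\|y\|\leq \|x-y\|$ by the reverse triangle inequality and $\|M_{p,q}(v)\|=1$. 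For the first term, the upper Mazur estimate on the sphere gives $\|M_{p,q}(u)-M_{p,q}(v)\|\leq \frac{p}{q}\|u-v\|$, and the standard inequality $\|u-v\|=\big\|\tfrac{x}{\|x\|}-\tfrac{y}{\|y\|}\big\|\leq \frac{2}{\|x\|}\|x-y\|$ (again using $\|x\|\geq\|y\|$) turns this into $\|x\|\cdot\|M_{p,q}(u)-M_{p,q}(v)\|\leq \frac{2p}{q}\|x-y\|$. Adding the two bounds yields the constant $\frac{2p}{q}+1$.

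For part \eqref{Item3PropMazupMapEstimate}, the hypothesis $\|x-y\|\geq \eps\max\{\|x\|^\alpha,\|y\|^\alpha\}$ is what replaces expansiveness, and I would again split into two cases according to whether the two norms are comparable. Assume $\|x\|\geq\|y\|$, so $\|x\|>0$. \textbf{Case 1: $\|y\|\geq \tfrac12\|x\|$ (comparable norms).} Then, using the same telescoping as above but now bounding below, $\|M_{p,q}(x)-M_{p,q}(y)\|\geq \|x\|\cdot\|M_{p,q}(u)-M_{p,q}(v)\|-(\|x\|-\|y\|)$; the lower Mazur estimate gives $\|M_{p,q}(u)-M_{p,q}(v)\|\geq C\|u-v\|^{p/q}$, and one has $\|u-v\|\geq \frac{1}{\|x\|}\|x-y\|$ (from $\|u-v\|\geq \frac{1}{2\|x\|}\|x-y\|$ always, and here one can afford a crude bound). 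The trouble is that the subtracted term $\|x\|-\|y\|$ can be as large as $\tfrac12\|x\|$, which a priori dwarfs the main term; this is handled by observing that in the comparable-norms case $\|x-y\|$ and $\|x\|$ are themselves comparable up to $\eps$ (since $\|x-y\|\geq \eps\|x\|^\alpha$ on one hand, and also $\|x-y\|\leq \|x\|+\|y\|\leq 2\|x\|$), so it suffices to get a lower bound of the right homogeneity and then absorb constants. Concretely one gets $\|M_{p,q}(x)-M_{p,q}(y)\|\gtrsim \|x\|^{1-p/q}\|x-y\|^{p/q}$, and substituting $\|x\|\leq (\|x-y\|/\eps)^{1/\alpha}$ (legitimate since $p/q>1$ makes the exponent $1-p/q$ negative) produces exactly the exponent $\frac{p}{q}-\frac{1}{\alpha}(\frac{p}{q}-1)$. \textbf{Case 2: $\|y\|< \tfrac12\|x\|$ (very different norms).} Then $\|x-y\|\geq \|x\|-\|y\|>\tfrac12\|x\|$, and also $\|M_{p,q}(x)-M_{p,q}(y)\|\geq \|M_{p,q}(x)\|-\|M_{p,q}(y)\|=\|x\|-\|y\|>\tfrac12\|x\|\geq \tfrac14\|x-y\|+\tfrac14\|x\|$; roughly, the norms alone already separate the images linearly in $\|x-y\|$, which is a stronger estimate than needed (the exponent $\frac{p}{q}-\frac1\alpha(\frac pq-1)$ is $\leq 1$ when $\alpha\leq 1$, and $\|x-y\|\geq $ const forces $\|x-y\|^{\text{that exponent}}\lesssim \|x-y\|$ only after checking $\|x-y\|$ is bounded below, which one gets from $\|x-y\|>\tfrac12\|x\|\geq \tfrac12\|y\|$ and $\|x-y\|\geq \eps\|x\|^\alpha$). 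Choosing $L$ to accommodate the worst of the two cases, and a possible additive adjustment for small $\|x-y\|$, completes the proof.

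The main obstacle, as flagged, is Case 1: the naive telescoping loses a term $\|x\|-\|y\|$ that is not controlled by the sphere estimate alone, so the real content is recognizing that the distance hypothesis $\|x-y\|\geq\eps\max\{\|x\|^\alpha,\|y\|^\alpha\}$ lets one trade a factor of $\|x\|$ for a power of $\|x-y\|$, and that this trade is exactly what converts the sphere exponent $p/q$ into the claimed exponent. A clean way to organize this, avoiding the subtraction issue entirely, is to instead route through the homogeneity identity $M_{p,q}(x)=\|x\|^{1-p/q}\cdot(\text{coordinatewise } p/q\text{-power map})$ applied to $x$ directly — but I would first try the telescoping argument above since it only invokes Theorem \ref{ThmMazurMap} as a black box.
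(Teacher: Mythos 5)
Your part (1) is correct and is essentially the paper's own argument: telescope through $\|x\|M_{p,q}(u)$ and $\|y\|M_{p,q}(v)$ (with $u=x/\|x\|$, $v=y/\|y\|$), use the sphere estimate of Theorem \ref{ThmMazurMap} and the elementary inequalities of Lemma \ref{LemmaEstimates}. Your overall strategy for part (2) is also the right one and matches the paper: first obtain a bound of the correct homogeneity, $\|M_{p,q}(x)-M_{p,q}(y)\|\gtrsim \|x\|^{1-p/q}\|x-y\|^{p/q}$, then substitute $\|x\|\leq(\|x-y\|/\eps)^{1/\alpha}$ to produce the exponent $\frac{p}{q}-\frac1\alpha\bigl(\frac{p}{q}-1\bigr)$; your Case 2 and the remark that for $\alpha<1$ the hypothesis forces $\|x-y\|$ to be bounded below (so a linear lower bound suffices there) are fine in spirit.

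The genuine gap is in your Case 1, and it is exactly the subtraction you flag. Splitting on $\|y\|\geq\frac12\|x\|$ versus $\|y\|<\frac12\|x\|$ is the wrong dichotomy: for $\alpha<1$ take $x=Re_1$, $y=(1-\delta)Re_1$ with $\delta\in(0,\frac12]$ and $R$ large enough that $\delta R\geq\eps R^\alpha$. Such pairs satisfy the hypothesis and lie in your Case 1, yet $u=v$, so your telescoped lower bound $\|x\|\,\|M_{p,q}(u)-M_{p,q}(v)\|-(\|x\|-\|y\|)$ equals $-(\|x\|-\|y\|)<0$ and yields nothing; moreover the inequality $\|u-v\|\geq\frac{1}{2\|x\|}\|x-y\|$ you invoke is simply false for them. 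Your proposed remedy does not repair this: for $\alpha<1$ the condition $\|x-y\|\geq\eps\|x\|^\alpha$ does not make $\|x-y\|$ comparable to $\|x\|$, and in any case no absorption of constants converts a possibly negative quantity into the required positive lower bound. The paper's fix is to split instead on $\|x\|-\|y\|$ versus $\frac12\|x-y\|$: if $\|x\|-\|y\|>\frac12\|x-y\|$, then norm preservation gives $\|M_{p,q}(x)-M_{p,q}(y)\|\geq\|x\|-\|y\|\geq\frac12\|x-y\|$, which covers precisely the configurations above; if $\|x\|-\|y\|\leq\frac12\|x-y\|$, then Lemma \ref{LemmaEstimates}(2) gives $\|y\|\,\|u-v\|\geq\frac12\|x-y\|$, and --- the key step that avoids any subtraction --- one applies Lemma \ref{LemmaEstimates}(1) to the images $M_{p,q}(x),M_{p,q}(y)$ (which satisfy $\|M_{p,q}(x)\|=\|x\|\geq\|y\|=\|M_{p,q}(y)\|$) to get $\|M_{p,q}(u)-M_{p,q}(v)\|\leq\frac{2}{\|x\|}\|M_{p,q}(x)-M_{p,q}(y)\|$; combining this with the sphere lower bound of Theorem \ref{ThmMazurMap} yields $\|x-y\|\leq 2\bigl(\tfrac{2}{C}\bigr)^{q/p}\|x\|^{1-q/p}\|M_{p,q}(x)-M_{p,q}(y)\|^{q/p}$ with no error term, i.e.\ the intermediate estimate of the correct homogeneity, after which your $\eps$--$\alpha$ substitution concludes as you intended. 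So the decomposition you chose, not the exponent bookkeeping, is where the proof breaks.
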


Before proving Lemma \ref{LemmaMazupMapEstimate}, we  isolate some simple estimates  for future use  (cf. \cite[Lemma 3.1]{Kalton2013Examples}).

\begin{lemma}\label{LemmaEstimates}
Let $X$ be a Banach space and $x,y\in X$ with $\|x\|\geq \|y\|>0$. The following holds
\begin{enumerate}
\item $\big\|\frac{x}{\|x\|}-\frac{y}{\|y\|}\big\|\leq 2\frac{\|x-y\|}{\|x\|}$ and
\item $\|x-y\|\leq \|x\|-\|y\|+\|y\| \big\|\frac{x}{\|x\|}-\frac{y}{\|y\|}\big\|$.\qed
\end{enumerate}
\end{lemma}

\begin{proof}[Proof of Lemma \ref{LemmaMazupMapEstimate}]
\eqref{Item1PropMazupMapEstimate} Fix $x,y\in \ell_p$. Without loss of generality,   suppose $\|x\|\geq \|y\|>0$. Then   Theorem \ref{ThmMazurMap} and Lemma \ref{LemmaEstimates} imply
\begin{align*}
\|M_{p,q}(x)-M_{p,q}(y)\|= & \Big\| \|x\|M_{p,q}\Big(\frac{ x}{\| x\|}\Big)- \|y\|M_{p,q}\Big(\frac{ y}{\| y\|}\Big)\Big\|
\\
\leq &\|x\|\Big\| M_{p,q}\Big(\frac{ x}{\| x\|}\Big)- M_{p,q}\Big(\frac{ y}{\| y\|}\Big)\Big\|\\ 
+ & \Big(\|x\|-\|y\|\Big)\Big\| M_{p,q}\Big(\frac{ y}{\| y\|}\Big)\Big\|\\
\leq & \Big(\frac{2p}{q}+1\Big)\|x-y\|.
\end{align*}


\eqref{Item3PropMazupMapEstimate}
 Fix $\eps>0$, $\alpha\in (0,1]$, and  $x,y\in \ell_p$ with $\|x-y\|\geq \eps\max\{\|x\|^\alpha,\|y\|^\alpha\}$. It is easily checked that we may  assume that $\|x\|\geq \|y\|>0$.
 
Suppose first that  $\|x\|-\|y\|>\frac{\|x-y\|}{2}$, then
\begin{align*}
\|M_{p,q}(x)-M_{p,q}(y)\|&\geq \|M_{p,q}(x)\|-\|M_{p,q}(y)\|\\
&= \|x\|-\|y\|\\
&\geq \frac{\|x-y\|}{2}
\end{align*}
(here we use that $\|M_{p,q}(z)\|=\|z\|$ for all $z\in \ell_p$). 

Suppose now that $0\le \|x\|-\|y\|\le \frac{\|x-y\|}{2}$. Then, by Lemma \ref{LemmaEstimates}, we have 
 \begin{equation}\label{Eq.Lemma.Est1} \|x-y\|\leq  2\|y\|\Big\|\frac{x}{\|x\|}-\frac{y}{\|y\|}\Big\|\leq 2\|x\|\Big\|\frac{x}{\|x\|}-\frac{y}{\|y\|}\Big\|.
 \end{equation}
Using Theorem \ref{ThmMazurMap} and the fact that    $\|M_{p,q}(z)\|=\|z\|$ for all $z\in \ell_p$ again, we have
  \begin{align*}
\|x\|\Big\|\frac{x}{\|x\|}-\frac{y}{\|y\|}\Big\|&\leq \frac{\|x\|}{C^{q/p}}\Big\|M_{p,q}\Big(\frac{x}{\|x\|}\Big)-M_{p,q}\Big(\frac{y}{\|y\|}\Big)\Big\|^{q/p}\\
&= \frac{\|x\|}{C^{q/p}}\Big\|\frac{M_{p,q}(x)}{\|M_{p,q}(x)\|}-\frac{M_{p,q}(y)}{\|M_{p,q}(y)\|}\Big\|^{q/p} \notag\\
&\leq \Big(\frac{2}{C}\Big)^{q/p}\|x\|^{1-q/p}\|M_{p,q}(x)-M_{p,q}(y)\|^{q/p}. \notag
 \end{align*}
As $q<p$ and   $\|x-y\|\geq\eps \max\{\|x\|^\alpha,\|y\|^\alpha\}=\eps \|x\|^\alpha$, \eqref{Eq.Lemma.Est1} and the inequality above imply that
  \begin{align}\label{Eq.1}
 \|x-y\|\leq 2\Big(\frac{2}{C}\Big)^{q/p}\eps^{\frac{q/p-1}{\alpha}}\|x-y\|^{\frac{1-q/p}{\alpha}}\|M_{p,q}(x)-M_{p,q}(y)\|^{q/p}.\notag
 \end{align}
Simplifying the above, we conclude that 
 \[\|x-y\|^{\frac{p}{q}-\frac{1}{\alpha}\big(\frac{p}{q}-1\big)}\leq  \frac{2^{1+p/q}}{C}  \eps^{\frac{1-p/q}{\alpha}}\|M_{p,q}(x)-M_{p,q}(y)\|.\]
 The result then follows taking  $L$ to be the maximum of $2$ and  $C^{-1}2^{1+p/q} \eps^{(1-p/q)/\alpha}$.
 \end{proof}

\begin{proof}[Proof of Theorem \ref{ThmMazurMapACE}]
   The first claim  is immediate from Lemma \ref{LemmaMazupMapEstimate} since $\alpha$ being larger than $\frac{p-q}{p}$ implies that $\frac{p}{q}-\frac{1}{\alpha}\big(\frac{p}{q}-1\big)$  is positive. The second claim follows  immediately from Lemma \ref{LemmaMazupMapEstimate} also since   $\frac{p}{q}-\frac{1}{\alpha}\big(\frac{p}{q}-1\big)=1$ if $\alpha=1$.  
\end{proof}

As a consequence of Theorem  \ref{ThmMazurMapACE}, we obtain that the existence of a coarse map between Banach spaces which is expanding at rate $\alpha$, for $\alpha\in (0,1]$, is strictly weaker than coarse embeddability. Indeed, it  contrasts with the well-known fact that   $\ell_p$ coarsely embeds into $\ell_q$ if and only if either $ p\leq q$ or $p,q\in [1,2]$ (see \cite[Corollary 7.3]{MendelNaor2008}).


\section{Expansion  and cotype preservation}\label{SectionCotype}

In Section \ref{SectionMazur}, we showed that, for  $p,q\in [1,\infty)$ with $q<p$,  there are coarse maps with nontrivial expanding properties from $\ell_p$ to $\ell_q$; which contrasts with the known results about the coarse embeddability of  $\ell_p$ into $\ell_q$. Precisely, Theorem \ref{ThmMazurMapACE} shows there are coarse maps $\ell_p \to \ell_q$ which are expanding at rate $\alpha$ as long as $\alpha> \frac{p-q}{p}$. Since we know that $\ell_p$ coarsely embeds into $\ell_q$ if and only if either $p\leq q$ or $p,q\in [1,2]$ (see \cite[Corollary 7.3]{MendelNaor2008}), this result is not true for $\alpha=0$. The main goal of the current    section is to  understand what happens for $\alpha$'s in the interval $(0,\frac{p-q}{p}]$. Using techniques developed by M. Mendel and A. Naor in their seminal paper about metric cotype (\cite{MendelNaor2008}), we show that the results of Section \ref{SectionMazur} do not hold for  $2\le q<p<\infty$ and $\alpha\le \frac{p-q}{p}$, nor for $1\le q<2<p$ and $\alpha \le \frac{p-2}{p}$. 

\medskip
We start this section recalling the necessary background on metric cotype. Given $m\in\N$, $\Z_m$ denotes the set of integers modulo $m$. Given $n,m\in\N$, $\mu=\mu_{m,n}$ denotes the normalized counting measure on $\Z^n_m$ and $\sigma=\sigma_n$ denotes the normalized counting measure on $\{-1,0,1\}^n$. For each $j\in \{1,\ldots, n\}$, $e_j$ denotes the vector in $\Z^n_m$ whose $j$-th coordinate is $1$ and all others are $0$. 

\begin{definition}[Metric cotype] Let $(X,d)$ be a metric space and $q,\Gamma>0$. We say that $X$ has \emph{metric cotype $q$ with constant $\Gamma$} if for all $n\in\N$ there is an even $m\in\N$ such that for all $f\colon \Z^n_m\to X$ we have 
\begin{align}
\label{Eq.MetricCotype}
\sum_{j=1}^n\int_{\Z^n_m}d\Big(f\big(x &+\frac{m}{2}e_j\big),f(x)\Big)^qd\mu(x)\\
&\leq \Gamma^qm^q \int_{\{-1,0,1\}^n}\int_{\Z^n_m}d\big(f(x+\eps),f(x)\big)^qd\mu(x)d\sigma(\eps).\notag
\end{align}
 Given $n\in\N$ and $\Gamma>0$, we let  $m_q(X, n, \Gamma)$ be the 
smallest even integer $m$ such that \eqref{Eq.MetricCotype} holds for all $f \colon  \Z^n_m\to X$. If no such $m$ exists, we set $m_q(X, n,\Gamma) = \infty$.
\end{definition}

The next lemma is the main technical result of this section and is a refinement of \cite[Lemma 7.1]{MendelNaor2008}. 

\begin{lemma}\label{LemmaCotypeInequality}
Let $(X,d)$ be a metric space, $n\in\N$, $q,s,\Gamma>0$, and $r\in (0,\infty]$. Let $\alpha\in [0,\frac{rs}{rs+1}]$,   $\rho\colon [0,\infty)\to [0,\infty)$, and  maps $f_n\colon \ell_r^n(\C)\to X$, for $n\in \N$,  be such that 
\[\|x-z\|\geq \max\{\|x\|^\alpha,\|z\|^\alpha\}+1\ \text{ implies }\ \|f_n(x)-f_n(z)\|\geq \rho(\|x-z\|),\] 
for all $n\in \N$ and $x,z \in \ell_r^n(\C)$. Then, we have that for all $n\in \N$:
\[n^{1/q}\rho(2n^s)\leq \Gamma\cdot m_q(X,n,\Gamma)\cdot \omega_{f_n}\Big(\frac{2\pi n^{s+1/r}}{m_q(X,n,\Gamma)}\Big)\]
(if $r=\infty$, we use the conventions $1/\infty=0$ and $\infty/\infty=1$).
\end{lemma}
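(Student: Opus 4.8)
The strategy mimics the proof of \cite[Lemma 7.1]{MendelNaor2008}, with a twist to accommodate the localized expansion hypothesis. The plan is to transfer the combinatorial structure of $\Z_m^n$ into $\ell_r^n(\C)$ via the standard exponential-type embedding, apply the metric cotype inequality there, and then read off the desired estimate from the two moduli $\rho$ and $\omega_{f_n}$. Fix $n$ and set $m=m_q(X,n,\Gamma)$ (we may assume $m<\infty$, otherwise the right-hand side is $+\infty$ and there is nothing to prove). First I would define $\psi\colon \Z_m^n\to \ell_r^n(\C)$ by $\psi(x)=\frac{n^{s+1/r}}{\text{(something)}}\cdot\big(e^{2\pi i x_j/m}\big)_{j=1}^n$ — more precisely, scale so that $\|\psi(x)\|_{\ell_r^n}$ has order $n^{s+1/r}$ (or order $n^s$ when $r=\infty$, matching the conventions), which is exactly the scaling that makes $\rho(2n^s)$ and $\omega_{f_n}(2\pi n^{s+1/r}/m)$ the relevant quantities. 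The key elementary facts are: (i) for $x,z\in\Z_m^n$, $|e^{2\pi i x_j/m}-e^{2\pi i z_j/m}|$ is comparable to $d_{\Z_m}(x_j,z_j)/m$, uniformly; (ii) replacing $x$ by $x+\tfrac m2 e_j$ changes exactly one coordinate and flips its sign, so $\|\psi(x+\tfrac m2 e_j)-\psi(x)\|_{\ell_r^n}$ is of order $n^{s+1/r}\cdot(2/m)\cdot n^{-1/r}=2n^s/m\cdot$const $\,\approx\, 2n^s$ after the scaling is chosen correctly; and (iii) for $\eps\in\{-1,0,1\}^n$, $\|\psi(x+\eps)-\psi(x)\|_{\ell_r^n}\lesssim n^{s+1/r}\cdot\tfrac{2\pi}{m}\cdot\big(\#\{j:\eps_j\ne0\}\big)^{1/r}/n^{1/r}\le 2\pi n^{s+1/r}/m$, which is precisely the argument appearing inside $\omega_{f_n}$ in the statement.

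Once $\psi$ is in hand, I would set $f=f_n\circ\psi\colon\Z_m^n\to X$ and apply the metric cotype inequality \eqref{Eq.MetricCotype} to it. The right-hand side of \eqref{Eq.MetricCotype} is controlled: since every $\|\psi(x+\eps)-\psi(x)\|_{\ell_r^n}\le 2\pi n^{s+1/r}/m$, coarseness gives $d(f(x+\eps),f(x))\le\omega_{f_n}(2\pi n^{s+1/r}/m)$ for every $x$ and $\eps$, so that term is at most $\Gamma^q m^q\,\omega_{f_n}(2\pi n^{s+1/r}/m)^q$. For the left-hand side, the point is to invoke the expansion hypothesis on each pair $\big(\psi(x+\tfrac m2 e_j),\psi(x)\big)$: one must check $\|\psi(x+\tfrac m2 e_j)-\psi(x)\|\ge\max\{\|\psi(x+\tfrac m2 e_j)\|^\alpha,\|\psi(x)\|^\alpha\}+1$. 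Here is where the constraint $\alpha\le\frac{rs}{rs+1}$ enters: the left side has order $n^s$ while $\|\psi(\cdot)\|^\alpha$ has order $n^{(s+1/r)\alpha}$, and $n^s\gtrsim n^{(s+1/r)\alpha}$ precisely when $s\ge(s+1/r)\alpha$, i.e. $\alpha\le\frac{s}{s+1/r}=\frac{rs}{rs+1}$; the additive $+1$ and all multiplicative constants can be absorbed by choosing the scaling constant in $\psi$ large (depending on $r,s,\alpha$ but not $n$), and for $n$ large — the finitely many small $n$ being harmless since one may always enlarge the scaling further, or note the inequality is only claimed up to the constant $\Gamma$. Thus each term on the left is at least $\rho(\|\psi(x+\tfrac m2 e_j)-\psi(x)\|)\ge\rho(2n^s)$, using monotonicity considerations or, more carefully, that $\|\psi(x+\tfrac m2 e_j)-\psi(x)\|$ is exactly (comparable to) $2n^s$ — one should set up the scaling so this difference is \emph{at least} $2n^s$ and $\rho$ is applied at a value $\ge 2n^s$; if $\rho$ is not assumed nondecreasing, replace $\rho$ by $t\mapsto\inf_{u\ge t}\rho(u)$ at the cost of nothing, since $\lim_{t\to\infty}\rho(t)=\infty$ is the only use made of it downstream. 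Summing over $j$ gives $n\cdot\rho(2n^s)^q$ on the left.

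Combining, $n\,\rho(2n^s)^q\le\Gamma^q m^q\,\omega_{f_n}(2\pi n^{s+1/r}/m)^q$, and taking $q$-th roots yields $n^{1/q}\rho(2n^s)\le\Gamma\, m\,\omega_{f_n}(2\pi n^{s+1/r}/m)$, which is the assertion with $m=m_q(X,n,\Gamma)$. The case $r=\infty$ is handled by the stated conventions: $\psi(x)=$const$\cdot n^s\big(e^{2\pi ix_j/m}\big)_j$, the sup-norm computations replace the $\ell_r$ ones, the displacement under $+\tfrac m2 e_j$ still has order $n^s/m$, and the displacement under any $\eps$ is $\le 2\pi n^s/m$, matching $n^{s+1/r}|_{r=\infty}=n^s$.

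\textbf{Main obstacle.} The only genuinely delicate point is verifying the localized expansion condition $\|\psi(x+\tfrac m2 e_j)-\psi(x)\|\ge\max\{\|\cdot\|^\alpha,\|\cdot\|^\alpha\}+1$ uniformly in $x$ and $j$ with a single scaling constant: one needs the displacement (order $n^s$) to dominate the norms raised to the $\alpha$ power (order $n^{(s+1/r)\alpha}$), and this is exactly why the hypothesis restricts $\alpha$ to $[0,\tfrac{rs}{rs+1}]$. Getting the scaling constant and the range of admissible $n$ right — so that the additive $+1$ and the constants from fact (i) above are absorbed — is the part that requires care, though it is a routine optimization once the exponents are matched.
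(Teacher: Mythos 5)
Your plan is the same as the paper's: push the discrete torus into $\ell_r^n(\C)$ by the exponential map with per-coordinate amplitude of order $n^s$, apply the metric cotype inequality to $f_n\circ\psi$, bound the edge terms by $\omega_{f_n}(2\pi n^{s+1/r}/m)$, and lower-bound the half-cube terms via the localized expansion hypothesis, with $\alpha\le\frac{rs}{rs+1}$ entering exactly where you say it does. However, two execution points in your write-up need correcting, because as stated they would only yield a variant of the lemma (constants changed, $n$ large, $\rho$ replaced by $\inf_{u\ge t}\rho(u)$) rather than the exact inequality claimed for every $n$ with $\rho$ itself; and your aside that "the inequality is only claimed up to the constant $\Gamma$" is a misreading, since $\Gamma$ is the fixed cotype constant, not a fudge factor. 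First, your half-cube computation "$n^{s+1/r}\cdot(2/m)\cdot n^{-1/r}=2n^s/m\approx2n^s$" contains a spurious $1/m$: the shift by $\frac m2 e_j$ flips the sign of one unimodular coordinate, so the displacement is exactly $2$ times the per-coordinate amplitude, with no $m$-dependence (and $2n^s/m$ is certainly not $\approx 2n^s$). Second, none of the hedging is needed or permissible: take the amplitude to be exactly $n^s$, i.e.\ $\psi(x)=n^s\sum_j e^{2\pi i x_j/m}e_j$. Then $\|\psi(y)\|_r=n^{s+1/r}$ for every $y$, the half-cube displacement is exactly $2n^s$, and since $\alpha\le\frac{rs}{rs+1}$ gives $(n^{s+1/r})^\alpha\le n^s$ while $n^s\ge1$, one has $2n^s\ge(n^{s+1/r})^\alpha+1$ for \emph{all} $n\ge1$; hence the expansion hypothesis applies verbatim at the exact value $2n^s$, yielding $\rho(2n^s)$ with no monotonicity assumption on $\rho$, no enlargement of the scaling, and no restriction to large $n$. (Note also that replacing $\rho$ by $t\mapsto\inf_{u\ge t}\rho(u)$ proves a strictly weaker inequality than the one stated, and the lemma does not even assume $\lim_{t\to\infty}\rho(t)=\infty$.) With that exact choice, your argument coincides with the paper's proof.
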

 
 \begin{proof}
To simplify notation, let $m=m_q(X,n,\Gamma)$. By a slight abuse of notation, we let $e_1,\ldots, e_n$ denote the standard basis of both $\Z^n_m$ and  $\ell_r^n(\C)$. Define a map $h\colon \Z_m^n\to \ell_r^n(\C)$ by letting 
\[h(x)=n^s\cdot\sum_{j=1}^n e^{\frac{2\pi i x_j}{m}}e_j\]
for all $x=(x_j)_{j=1}^n\in \Z_m^n$. Note first that, since $t\mapsto e^{it}$ is $1$-Lipschitz on $\R$,  
\[\|h(x+\eps)-h(x)\|_r \le n^s\Big\|\Big(\frac{2\pi |\eps_j|}{m}\Big)_{j=1}^n\Big\|_r \le \frac{2\pi n^{s+1/r}}{m},\]
for all $\eps=(\eps_j)_{j=1}^n\in \{-1,0,1\}^n$ and all $x=(x_j)_{j=1}^n\in Z_m^n$. Let $g_n=f_n\circ h$, then 
\[\|g_n(x+\eps)-g_n(x)\|\leq \omega_{f_n}\Big(\frac{2\pi n^{s+1/r}}{m}\Big)\]
for all $\eps=(\eps_j)_{j=1}^n\in \{-1,0,1\}^n$ and all $x=(x_j)_{j=1}^n\in Z_m^n$. Therefore, integrating with respect to $x$ and $\eps$, we have 
\begin{equation}\label{Eq.cotype1}
\int_{\{-1,0,1\}^n}\int_{\Z_m^n}\|g_n(x+\eps)-g_n(x)\|^qd\mu(x)d\sigma(\eps)\leq \omega_{f_n}\Big(\frac{2\pi n^{s+1/r}}{m}\Big)^q.
\end{equation}
We now use the hypothesis on $f_n$. As $\alpha\le \frac{rs}{rs+1}$, we have that    
\[2n^s=2(n^{s+1/r})^{\frac{rs}{rs+1}}\geq  (n^{s+1/r})^\alpha+1,\] 
for all $n\in \N$. Hence,  
\begin{align*}
\Big\|h\Big( x+\frac{m}{2}e_j\Big)-h(x)\Big\|_r=2n^{s} \ge (n^{s+1/r})^\alpha+1, 
\end{align*}	
for all $x\in \Z_m^n$ and all $j\in \{1,\ldots, n\}$. Therefore, as $\|h(y)\|_r=n^{s+1/r}$ for all $y\in \Z_m^n$, it follows that
\[ \Big\|g_n\Big(x+\frac{m}{2}e_j\Big)-g_n(x)\Big\|\geq \rho(2n^{s})\]
for all $x\in \Z_m^n$ and all $j\in \{1,\ldots, n\}$. Hence, 
\begin{equation}\label{Eq.cotype2}\sum_{j=1}^n\int_{\Z_m^n}\Big\|g_n\Big(x+\frac{m}{2}e_j\Big)-g_n(x)\Big\|^qd\mu(x)\geq n\rho(2n^{s})^q.
\end{equation}
By the definition of $m=m_q(M,n,\Gamma)$, \eqref{Eq.cotype1} and \eqref{Eq.cotype2} show that  
\[n\rho(2n^{s})^q\leq \Gamma^qm^q \omega_{f_n}\Big(\frac{2\pi n^{s+1/r}}{m}\Big)^q.\]
Taking the $q$-th root at both sides above  finishes the proof.
\end{proof}
 
 We now turn to our result about preservation of cotype by coarse maps satisfying some weak expanding conditions. For completeness, we quickly recall the notions of type and cotype. Let $X$ be a Banach space and $p\in (1,2]$. We say that $X$ has \emph{type $p$}  if there is $C>0$ such that 
 \[ \frac{1}{2^n} \sum_{(\eps_i)_{i=1}^n\in \{-1,1\}^n}\Big\|\sum_{i=1}^n\eps_ix_i\Big\|^p\leq C\sum_{i=1}^n\|x_i\|^p\]
 for all $n\in\N$ and all $x_1,\ldots, x_n\in X$. If $X$ does not have type $p$ for any $p\in (1,2]$, $X$ is said to have \emph{trivial type}. If $q\in [2,\infty)$, we say that $X$ has \emph{cotype $q$} if there is $C>0$ such that 
 \[\frac{1}{2^n}\sum_{(\eps_i)_{i=1}^n\in \{-1,1\}^n}\Big\|\sum_{i=1}^n\eps_ix_i\Big\|^q\geq C\sum_{i=1}^n\|x_i\|^q\]
 for all $n\in\N$ and all $x_1,\ldots, x_n\in X$. We let 
 \[q_X=\inf\{q\in [2,\infty]\mid X \text{ has cotype q}\},\]
 where $q_X$ is taken to be infinity if $X$ has no cotype in $[2,\infty)$. 

Let $p\in [1,\infty]$ and $C\ge 1$.  We say that a Banach space  $X$ \emph{contains the $\ell_p^n$'s $C$-uniformly} if for all $n\in \N$, $\ell_p^n$ linearly embeds into $X$ with distortion at most $C$ and that $X$ \emph{uniformly contains the $\ell_p^n$'s} if it contains the $\ell_p^n$'s $C$-uniformly, for some $C\ge 1$. It is known that if $X$ uniformly contains the $\ell_p^n$'s, then it contains the $\ell_p^n$'s $C$-uniformly for all $C>1$. We shall also use the following fundamental result of B. Maurey and G. Pisier \cite{MaureyPisier1976Studia}: a Banach space $X$ uniformly contains the $\ell_{q_X}^n$'s. We refer the reader to \cite[Theorem 6]{MaureyHandbook} for a precise statement and a self contained proof of this result, as well as for a complete survey on these notions.

\medskip
We are now ready to prove the following statement, which is slightly more precise than Theorem \ref{ThmCotypeACE}.

\begin{theorem}\label{ThmCotype} Let $2\le q<p<\infty$. Suppose that $X$ is a Banach space  uniformly containing  the $\ell_p^n$'s and that $Y$ has cotype $q$ and nontrivial type and suppose also that $0\le \alpha\le\frac{p-q}{p}$. Then there is no coarse map from $X$ to $Y$ that is expanding at rate $\alpha$. 
\end{theorem}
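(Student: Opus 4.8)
The plan is to derive a contradiction from the Maurey--Pisier machinery together with the refined metric-cotype inequality of Lemma \ref{LemmaCotypeInequality}, exactly in the spirit of the proof of \cite[Theorem 1.11]{MendelNaor2008}, but keeping track of the exponent $\alpha$. First I would use that $X$ uniformly contains the $\ell_p^n$'s to transport the hypothetical coarse map $f\colon X\to Y$ to a sequence of maps $f_n\colon \ell_p^n(\C)\to Y$; here one needs the (standard) observation that one can take the $\ell_p^n$'s over $\C$ rather than $\R$, since $\ell_p^n(\C)$ embeds isometrically into $\ell_p^{2n}(\R)$ up to a fixed constant (or one works with $\ell_p^n(\R)$ and notes Lemma \ref{LemmaCotypeInequality} only used $t\mapsto e^{it}$ being $1$-Lipschitz, which can be replaced by a real trigonometric analogue). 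After composing with these bounded-distortion embeddings, the maps $f_n$ are still coarse with a common modulus $\omega$, and the hypothesis ``expanding at rate $\alpha$'' passes (after rescaling $L$) to the hypothesis needed in Lemma \ref{LemmaCotypeInequality}: that $\|x-z\|\ge\max\{\|x\|^\alpha,\|z\|^\alpha\}+1$ forces $\|f_n(x)-f_n(z)\|\ge\rho(\|x-z\|)$ for a fixed unbounded $\rho$.

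Next I would apply Lemma \ref{LemmaCotypeInequality} with $r=p$ and a parameter $s>0$ chosen so that $\alpha\le\frac{ps}{ps+1}$; since $\alpha\le\frac{p-q}{p}<1$, such $s$ exists (explicitly any $s\ge\frac{\alpha}{p(1-\alpha)}$ works). Since $Y$ has nontrivial type, by the Mendel--Naor theorem \cite[Theorem 4.1]{MendelNaor2008} $Y$ has metric cotype $q$ with some constant $\Gamma$, and crucially the scaling parameter satisfies $m_q(Y,n,\Gamma)\lesssim n^{1/q}$. Plugging this into the conclusion of Lemma \ref{LemmaCotypeInequality} gives
\[
n^{1/q}\rho(2n^s)\le \Gamma\cdot m_q(Y,n,\Gamma)\cdot\omega\!\left(\frac{2\pi n^{s+1/p}}{m_q(Y,n,\Gamma)}\right)\lesssim n^{1/q}\cdot\omega\!\left(C n^{s+1/p-1/q}\right).
\]
Dividing by $n^{1/q}$, this says $\rho(2n^s)$ is bounded by $\omega$ evaluated at something of order $n^{s+1/p-1/q}$. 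The point is now that $\frac1p-\frac1q<0$, so if we can only arrange $s+\frac1p-\frac1q\le 0$ while keeping $s>0$ and $\alpha\le\frac{ps}{ps+1}$, then the argument of $\omega$ stays bounded, hence the right-hand side stays bounded, while $\rho(2n^s)\to\infty$ as $n\to\infty$ — a contradiction.

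The heart of the matter is therefore the arithmetic of choosing $s$: one needs simultaneously $s>0$, $s\le\frac1q-\frac1p$ (to bound the $\omega$-argument), and $\frac{ps}{ps+1}\ge\alpha$ (to invoke the lemma), i.e. $s\ge\frac{\alpha}{p(1-\alpha)}$. So the constraints are $\frac{\alpha}{p(1-\alpha)}\le s\le\frac1q-\frac1p$, which is solvable precisely when $\frac{\alpha}{p(1-\alpha)}\le\frac1q-\frac1p=\frac{p-q}{pq}$; rearranging, $q\alpha\le(p-q)(1-\alpha)$, i.e. $\alpha\le\frac{p-q}{p}$ — exactly the hypothesis. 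I would also handle the boundary case $s+\frac1p-\frac1q=0$ (when $\alpha=\frac{p-q}{p}$ forces $s=\frac{p-q}{pq}$): there $\omega$ is evaluated at a constant, still fine. The main obstacle, and the only genuinely delicate point, is getting the metric-cotype scaling $m_q(Y,n,\Gamma)=O(n^{1/q})$ right and making sure the two powers of $n^{1/q}$ cancel exactly so that the surviving estimate is $\rho(2n^s)\lesssim\omega(\mathrm{const})$; everything else is bookkeeping with constants and the routine transfer of the map through the uniform $\ell_p^n$-embeddings.
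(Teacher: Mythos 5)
Your route is the same as the paper's: transfer $f$ through the uniform $\ell_p^n$-embeddings (the paper simply treats $\ell_p^n(\C)$ as a real space, as you suggest), apply Lemma \ref{LemmaCotypeInequality} with $r=p$, and use \cite[Theorem 4.1]{MendelNaor2008} for $m_q(Y,n,\Gamma)=O(n^{1/q})$; the paper fixes $s=\frac1q-\frac1p$, which is exactly the right endpoint of your admissible interval $\big[\frac{\alpha}{p(1-\alpha)},\frac1q-\frac1p\big]$, so your arithmetic in $s$ is equivalent to (if slightly more flexible than) the paper's choice.

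There is, however, one step whose justification is missing and, as written, goes the wrong way. From the upper bound $m_q(Y,n,\Gamma)\le A n^{1/q}$ alone you get $\frac{2\pi n^{s+1/p}}{m_q(Y,n,\Gamma)}\ge c\,n^{s+1/p-1/q}$, i.e.\ only a \emph{lower} bound on the argument of $\omega$; since $\omega$ is nondecreasing, this does not give $\omega\big(\frac{2\pi n^{s+1/p}}{m_q(Y,n,\Gamma)}\big)\le\omega\big(Cn^{s+1/p-1/q}\big)$, so your claim that the argument of $\omega$ ``stays bounded'' is not a consequence of what you invoked. What you need is the matching \emph{lower} estimate $m_q(Y,n,\Gamma)\ge n^{1/q}/\Gamma$, which does hold in general (\cite[Lemma 2.3]{MendelNaor2008}) and is precisely what the paper uses: with $s=\frac1q-\frac1p$ the argument of $\omega$ is at most $2\pi\Gamma$, and the right-hand side is at most $\Gamma A\,n^{1/q}\,\omega_f(4\pi\Gamma)$, giving $\rho(2n^{1/q-1/p})\le \Gamma A\,\omega_f(4\pi\Gamma)$ and the contradiction. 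Alternatively, you can dispense with the lower bound on $m_q$ by using that a coarse map between Banach spaces satisfies $\omega_f(t)\le Kt+K$ (metric convexity), whence $m_q\,\omega\big(\frac{2\pi n^{s+1/p}}{m_q}\big)\le 4\pi K n^{s+1/p}+K m_q\lesssim n^{1/q}$ whenever $s+\frac1p\le\frac1q$. With either repair your argument is complete and coincides with the paper's proof; the remaining points (common modulus after composing with the embeddings, rescaling of $L$, the boundary case) are handled correctly.
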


\begin{proof}
Fix a coarse map $f\colon X\to Y$ which is expanding at rate $\alpha$. In particular, there exists  $\rho\colon [0,\infty)\to [0,\infty)$ increasing with $\lim_{t\to \infty}\rho(t)=\infty$ for which 
 \begin{equation}\label{f-embedding}
 \|x-z\|\geq \frac12\max\{\|x\|^\alpha,\|z\|^\alpha\}+1\ \text{ implies }\ \|f(x)-f(z)\|\geq \rho(\|x-z\|).    
 \end{equation} Suppose towards a contradiction that $0<\alpha\le\frac{p-q}{p}$, that is, $p\ge \frac{q}{1-\alpha}$. Treating $\ell_{p}^n(\C)$ as a real Banach space, we have, by assumption, that for each $n\in\N$, there exists an isomorphic embedding $g_n\colon \ell_p^n(\C)\to X$  with distortion at most $2$. Without loss of generality, assume 
 \begin{equation}\label{g_n-embedding}
 \|x\|\leq \|g_n(x)\|\leq 2\|x\|    
 \end{equation}
 for all $n\in\N$ and all  $x\in \ell_p^n(\C)$.  For each $n\in\N$, let $ h_n= f\circ g_n$. It easily follows from \eqref{f-embedding} and \eqref{g_n-embedding} that for $x,z \in \ell_p^n(\C)$,
\[\|x-z\|\geq \max\{\|x\|^\alpha,\|z\|^\alpha\}+1\ \text{ implies }\ \|h_n(x)-h_n(z)\|\geq \rho(\|x-z\|).\] 
Note also that 
\begin{equation}\label{EqThmCotypeACE}
\omega_{h_n}(t)\leq \omega_f(2t)\ \text{ for all } \ n\in\N\ \text{ and all }\ t\geq 0.
\end{equation} 
 It follows from \cite[Theorem 4.1]{MendelNaor2008} that there exists  $\Gamma>0$ such that $m_q(Y,n,\Gamma)=O(n^{1/q})$, i.e.,  there exists $A>0$ such that $m_q(Y,n,\Gamma)\leq An^{1/q}$ for all $n \in \N$ (here is there the hypothesis of $Y$ having nontrivial type in used). On the other hand, by \cite[Lemma 2.3]{MendelNaor2008}, we have that $ m_q(Y,n,\Gamma)\geq \frac{n^{1/q}}{\Gamma}$. We now apply   Lemma \ref{LemmaCotypeInequality} with $q=q$, $r=p$ and $s=\frac1q-\frac1p$. Then $0\le \alpha\le \frac{p-q}{p}=\frac{rs}{rs+1}$. It then follows from   Lemma \ref{LemmaCotypeInequality} and  \eqref{EqThmCotypeACE}  that 
\[\rho\Big(2n^{\frac{1}{q}-\frac{1}{p}}\Big)
\leq \Gamma A \omega_{h_n}\Big(\frac{2\pi n^{1/q}}{m_q(Y,n,\Gamma)}\Big)
\leq\Gamma A \omega_f(4\pi\Gamma)\]
for all $n\in \N$. As $\frac1q-\frac{1}{p}>0$ and $\lim_{t\to \infty}\rho(t)=\infty$, we obtain the expected contradiction.  
 \end{proof}

\begin{corollary}\label{CorEmblpIntolq}
Let $1\le q<p<\infty$ and $\alpha\in [0,1]$.
\begin{enumerate}
    \item\label{CorEmblpIntolqItem1} If $q\geq 2$, then  there is a coarse map $f\colon \ell_p\to \ell_q$ which is also expanding at rate  $\alpha$ if and only if $\alpha>\frac{p-q}{p}$.
    \item\label{CorEmblpIntolqItem2} If $q\leq 2<p$, then  there is a coarse map $f\colon \ell_p\to \ell_q$ which is also expanding at rate  $\alpha$ if and only if $\alpha>\frac{p-2}{p}$.
\end{enumerate} 
\end{corollary}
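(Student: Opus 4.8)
The plan is to deduce the corollary from Theorem \ref{ThmMazurMapACE} and Theorem \ref{ThmCotype}, together with two classical facts about the $\ell_q$'s. It is convenient to set $q_\star=\max\{q,2\}$: in case \eqref{CorEmblpIntolqItem1} one has $\frac{p-q}{p}=\frac{p-q_\star}{p}$, and in case \eqref{CorEmblpIntolqItem2} one has $\frac{p-2}{p}=\frac{p-q_\star}{p}$, so in both cases the statement to prove is that there is a coarse map $\ell_p\to\ell_q$ expanding at rate $\alpha$ if and only if $\alpha>\frac{p-q_\star}{p}$. I would first record the routine observation that if $f\colon X\to Z$ is a coarse map expanding at rate $\alpha$ and $g\colon Z\to Y$ is a coarse embedding, then $g\circ f$ is a coarse map expanding at rate $\alpha$: coarseness follows from $\omega_{g\circ f}\le\omega_g\circ\omega_f$, and for the expansion, given $L>0$ and the modulus $\rho$ associated to $f$ and $L$, the modulus $\rho_g\circ\rho$ works for $g\circ f$ and $L$ (here one uses that $\rho_g$ is nondecreasing and that $\rho_g(t),\rho(t)\to\infty$).

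For the \emph{if} direction: in case \eqref{CorEmblpIntolqItem1}, if $\alpha>\frac{p-q}{p}$ then Theorem \ref{ThmMazurMapACE} already furnishes such a map, namely the Mazur map $M_{p,q}\colon\ell_p\to\ell_q$. In case \eqref{CorEmblpIntolqItem2}, if $\alpha>\frac{p-2}{p}$ then Theorem \ref{ThmMazurMapACE} applied with target $\ell_2$ shows that $M_{p,2}\colon\ell_p\to\ell_2$ is coarse and expanding at rate $\alpha$ (note $2<p$ here); composing it with a coarse embedding $\ell_2\to\ell_q$, which exists by \cite[Corollary 7.3]{MendelNaor2008} since $2,q\in[1,2]$, and invoking the observation above, yields a coarse map $\ell_p\to\ell_q$ expanding at rate $\alpha$.

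For the \emph{only if} direction, suppose there is a coarse map $f\colon\ell_p\to\ell_q$ expanding at rate $\alpha$ with $\alpha\le\frac{p-q_\star}{p}$, and aim for a contradiction. Compose $f$ with a coarse embedding $\ell_q\to\ell_{q_\star}$ — the identity if $q\ge2$, and a coarse embedding provided by \cite[Corollary 7.3]{MendelNaor2008} if $q<2$ (then $q_\star=2$ and $q\le2$). By the observation above this produces a coarse map $\ell_p\to\ell_{q_\star}$ still expanding at rate $\alpha$. Now $2\le q_\star<p$, the space $\ell_{q_\star}$ has cotype $q_\star$ and nontrivial type (in fact type $2$), the space $\ell_p$ trivially uniformly contains the $\ell_p^n$'s, and $0\le\alpha\le\frac{p-q_\star}{p}$; so Theorem \ref{ThmCotype} rules out the existence of such a map, a contradiction.

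The whole argument is a bookkeeping exercise once Theorems \ref{ThmMazurMapACE} and \ref{ThmCotype} are available; the only two points needing a little care are the stability of ``expanding at rate $\alpha$'' under post-composition with a coarse embedding, and the fact that for $q\le2$ the relevant cotype of $\ell_q$ is $2$ rather than $q$ — this is what makes the threshold jump from $\frac{p-q}{p}$ to $\frac{p-2}{p}$ as $q$ crosses $2$, and the detour through $\ell_{q_\star}$ is in particular what allows one to handle $q=1$, where $\ell_1$ itself has trivial type and Theorem \ref{ThmCotype} cannot be applied to it directly (for $q\in(1,2]$ one could instead apply Theorem \ref{ThmCotype} to $\ell_q$ itself with cotype $2$).
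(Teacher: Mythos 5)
Your proof is correct and rests on the same ingredients as the paper's: the Mazur map estimates of Theorem \ref{ThmMazurMapACE} for the positive direction, Theorem \ref{ThmCotype} for the negative one, and the coarse embeddability between $\ell_q$ and $\ell_2$ when $q\le 2$ as a bridge. The differences are small but worth recording. In the positive direction of item \eqref{CorEmblpIntolqItem2}, the paper splits the range, using $M_{p,q}$ directly when $\alpha>\frac{p-q}{p}$ and $M_{p,2}$ followed by a coarse embedding $\ell_2\to\ell_q$ only for $\alpha\in(\frac{p-2}{p},\frac{p-q}{p}]$, whereas you treat the whole range $\alpha>\frac{p-2}{p}$ uniformly through $M_{p,2}$; both work, and you correctly isolate the (implicitly used) stability of ``expanding at rate $\alpha$'' under post-composition with a coarse embedding. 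In the negative direction, the paper applies Theorem \ref{ThmCotype} directly to $Y=\ell_q$, using that $\ell_q$ has cotype $2$ when $q\le 2$, while you first compose with a coarse embedding $\ell_q\to\ell_{\max\{q,2\}}$ and apply the theorem to that target. Your detour is in fact the more careful route: Theorem \ref{ThmCotype} assumes the target has nontrivial type, which $\ell_1$ fails, so at $q=1$ the paper's one-line reduction needs exactly the composition you spell out (for $q\in(1,2]$ the direct application is fine, as you note). So your writeup matches the paper's strategy and is, if anything, slightly more complete at that endpoint.
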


\begin{proof}
\eqref{CorEmblpIntolqItem1} Assume first that $\alpha \in (\frac{p-q}{p},1]$. The existence of a coarse map $f\colon \ell_p\to \ell_q$ which is  expanding at rate  $\alpha$ is ensured by Theorem  \ref{ThmMazurMapACE}.

Assume now that $\alpha \in [0,\frac{p-q}{p}]$. Since $\ell_p$ obviously contains the $\ell_p^n$'s uniformly and $\ell_q$ has cotype $q$, Theorem \ref{ThmCotype} implies that there is no coarse map from $\ell_p$ to $\ell_q$ that is expanding at rate $\alpha$.

\eqref{CorEmblpIntolqItem2} Since $q\leq 2$, $\ell_q$ has cotype 2. Hence, proceeding as in the previous item, we have that  there is no coarse map $f\colon \ell_p\to \ell_q$ which is also expanding at rate  $\alpha$ for $\alpha\in [0,\frac{p-2}{p}]$. If $\alpha\geq  \frac{p-q}{p}$, the Mazur map $M_{p,q}\colon \ell_p\to \ell_q$   is also expanding at rate  $\alpha$ (Theorem \ref{ThmMazurMapACE}).  Finally, if $q\in (\frac{p-2}{p},\frac{p-q}{p}]$,  the result follows since the Mazur map $M_{p,2}\colon \ell_p\to \ell_2$   is   expanding at rate  $\alpha$. Therefore, if $f\colon \ell_2\to \ell_q$ is a coarse embedding (see \cite[Theorem 5]{Nowak2006FundMath}), then $f\circ M _{p,2}$ has the required property. 
\end{proof}

\begin{proof}[Proof of Theorem \ref{ThmCotypeACE}]
If $\alpha=0$, this is  \cite[Theorem 1.11]{MendelNaor2008} and if $\alpha=1$, this is obvious. Assume now that $\alpha \in (0,1)$. We may also assume that $q_Y<\infty$. As we have recalled, by \cite{MaureyPisier1976Studia} (alternatively, see \cite[Theorem 6]{MaureyHandbook}), $X$ uniformly contains the $\ell_{q_X}^n$'s. On the other hand, for any $q>q_Y$, $Y$ has cotype $q$. It then follows from Theorem \ref{ThmCotype} that $\alpha >\frac{q_X-q}{q_X}$. Taking the infimum over all $q>q_Y$ yields the conclusion. 
\end{proof}

\section{Embedding of certain graphs and expansion}

In this section we will show how to use some classical countably branching metric graphs to rule out the existence of   coarse maps which are linearly expanding (or just expanding) at rate $1$ or or less  between Banach spaces. Our statements will actually be about the stability of some concentration properties for Lipschitz maps on certain graphs under these generalizations of coarse Lipschitz (or just coarse) embeddings. We will deal with two different distances that are defined on the following sets. 

\begin{definition}
Let $k\in\N$. We denote the set  of all subsets of $\N$ with $k$ elements by $[\N]^k$ and, given $\bar n\in[\N]^k$, we   write $\bar n=(n_1,\ldots, n_k)$ where  $n_1<\ldots< n_k$.
\end{definition}

\subsection{Hamming graphs}
We start this section by recalling the definition of the Hamming metric and of the associated concentration property HFC$_p$. 

\begin{definition}
Let $k\in\N$.  The  \emph{Hamming metric} $d_{\mathbb H}=d_{\mathbb H,k}$ on $[\N]^k$ is defined as follows: given $\bar n=(n_1,\ldots, n_k),\bar m=(m_1,\ldots, m_k)\in [\N]^k$, we let 
\[d_{\mathbb H}(\bar n, \bar m)=|\{i\in \{1,\ldots, k\}\mid n_i\neq m_i\}|.\]
\end{definition}

Note that $([\N]^k,d_{\mathbb H})$ is a metric graph in the sense that if two elements of $[\N]^k$ are declared to be adjacent if and only if they are at distance $1$, then the distance between arbitrary elements of $[\N]^k$ is the length of the shortest path joining them. In particular, if $X$ is a Banach space and $\phi:([\N]^k,d_{\mathbb H}) \to X$ is a Lipschitz map, then the Lipschiz constant of $\phi$ is $\Lip(\phi)=\omega_\phi(1)$. We now recall the following definition, due to A. Fovelle (see \cite[Subsection 2.5]{Fovelle}). 

 \begin{definition}[Definition \ref{DefHCF}] Let $p\in (1,\infty]$. We say that a Banach space $X$ has the \emph{Hamming full concentration property for $p$} (abbreviated as \emph{HFC$_p$}) if there exists a constant $C\ge 1$ such that for any $k\in \N$ and any $1$-Lipschitz map $\phi\colon ([\N]^k,d_{\mathbb H}) \to X$, there exists an infinite subset $\M$ of $\N$ such that \[\diam(\phi([\N]^k))\le Ck^{1/p}.\] 
 In the above situation, we say that $X$ has the \emph{HFC$_p$ with constant $C$}. 
\end{definition}

\begin{example}\label{example HFC}
N. Kalton and L. Randrianarivony proved in \cite[Theorem 4.2]{KaltonRandrianarivony2008} that, given $p\in (1,\infty)$,  any reflexive Banach space which is also $p$-asymptotically uniformly smooth (abbreviated as $p$-AUS) must have  HFC$_p$. Although this will not play an important role in these notes, we recall the definition of $p$-AUSness here for the readers convenience: the \emph{modulus of asymptotic uniform smoothness} of a Banach space $X$ is given by 
  \[\bar\rho_X(t)=\sup_{x\in \partial B_X}\inf_{Y\in \mathrm{cof}(X)}\sup_{y\in \partial B_Y}\|x+ty\|-1,\]
where $\mathrm{cof}(X)$ denotes the set of all closed finite codimensional subspaces of $X$. Then, for $p\in (1,\infty)$, $X$ is called \emph{$p$-AUS} if there is $C\geq 1$ such that $\bar \rho_X(t)\leq Ct^p$ for all $t\geq 0$.   Clearly, for $p\in (1,\infty)$, $\ell_p$ is $p$-AUS, hence, the cited result above implies that $\ell_p$ has HFC$_p$. But, if $q<p$, $\ell_q$ fails HFC$_p$ as it is witnessed by the maps   
  \[\bar n=(n_1,\ldots,n_k)\in [\N]^k\mapsto \sum_{i=1}^ke_i\in \ell_q, \ k\in\N,\]
  where $(e_i)_{i=1}^\infty $ is the canonical unit basis of $\ell_q$. 
\end{example}

\begin{example}\label{ExaAsympc_0}
A Banach space $X$ is said to be \emph{asymptotic-$c_0$} if the following happens: 
\begin{gather*}
    \exists C\geq 1,\ \forall n\in\N,\ \exists X_1\in \mathrm{cof}(X), \forall x_1\in B_{X_1},\ \ldots,\ \exists X_n\in \mathrm{cof}(X), \forall x_n\in B_{X_n}\\
    \text{ such that }  \   \Big\|\sum_{i-1}^na_ix_i\Big\|\leq C \max_{1\leq i\leq n}|a_i|\ \text{ for all } \ (a_i)_{i=1}^n\in \R^\N.
    \end{gather*}
 The second named author together with  F.  Baudier, P.  Motakis, and Th. Schlumprecht proved that the property of a Banach space having HFC$_\infty$ is equivalent to it being reflexive and asymptotic-$c_0$ (see   \cite[Theorem B]{BaudierLancienMotakisSchlumprecht2018}).
\end{example}

\begin{proof}[Proof of Theorem \ref{ThmPreservationHFCpByLinearExpansion}\eqref{ThmPreservationHFCpByLinearExpansionItem1}]  Let $f\colon X\to Y$ be a coarse map which is linearly expanding at rate $1$. We start by fixing  some constants: As $f$ is coarse, $\omega_f(1)<\infty$ and it follows that there is  $K>1$ such that \[\omega_f(t)\le Kt+K, \text{ for all } t\ge 0\] 
(this easy consequence of the triangle inequality and the metric convexity of Banach spaces can be found, for instance, in \cite[Lemma 1.4]{KaltonSurvey}). As $f$ is linearly expanding at rate $1$, there is $L>1$ such that 
\begin{equation}\label{Eq1ThmPreservationHFCpByLinearExpansion}\|x-z\|\geq \frac{1}{8}\max\{\|x\|,\|z\|\}+1\ \text{ implies  }\ \|f(x)-f(z)\|\geq \frac{1}{L}\|x-z\|-L.
\end{equation}
Let  $C_Y\ge 1$ be a constant such that   $Y$ has HFC$_p$ with constant $C_Y$ and fix  $C>6KLC_Y+6L^2$.

Assume towards a contradiction that $X$ does not have HFC$_p$. Then there exist $k\in \N$ and a $1$-Lipschitz map $\phi\colon ([\N]^k,d_{\mathbb H})\to X$  such that  \[\diam(\phi([\M]^k))\ge Ck^{1/p}\]
for all infinite $\M\subseteq \N$. Let 
\begin{equation}\label{eqlambda}
\lambda=\inf\{k^{-1/p}\diam(\phi([\M]^k))\mid \M\subseteq\N \text{ and }|\M|=\infty\};
\end{equation}  so, $\lambda\ge C$ (in particular, $\lambda>12$). Pick now an infinite $\M\subseteq \N$ such that 
\[\lambda \le \diam(\phi([\M]^k))k^{-1/p} \le 2\lambda.\]    In particular, there exists $x\in X$ such that $\phi([\M]^k)$ is included in the closed ball of radius $2\lambda k^{1/p}$ centered at $x$. By replacing $\phi$ by $\phi-x$,  we may assume that $\phi([\M]^k)$ is included in $2\lambda k^{1/p} B_X$. 

By the definition of $\lambda$,  $\diam(\phi([\D]^k))\ge \lambda k^{1/p}$  for all infinite  $\D\subseteq \M$.  So, for any such $\D$, we can find $\bar{n},\bar{m}\in [\D]^k$ with $\bar n< \bar m$   such that $\|\phi(\bar{n})-\phi(\bar{m})\|\ge \frac{\lambda}{3}k^{1/p}$. Indeed,  for all $\bar{n},\bar{m}\in [\D]^k$, we can find $\bar{p}\in [\D]^k$ such that $\bar{n}<\bar{p}$ and $\bar{m}<\bar{p}$. Therefore, if $\|\phi(\bar{n})-\phi(\bar{m})\|$ were smaller than $\frac{\lambda}{3}k^{1/p}$ for all $\bar n<\bar m$ in $[\D]^k$, the triangle inequality would imply that $\diam(\phi([\D]^k))<  \lambda k^{1/p}$; contradiction. Moreover, identifying a pair $(\bar n,\bar m)$, where $\bar n,\bar m\in [\D]^k$ and $\bar{n}<\bar{m}$, with an element of $[\M]^{2k}$ and applying Ramsey's theorem, we may furthermore assume, by passing to an infinite subset of $\M$  if necessary, that \begin{equation}\label{EqHFCp}
\|\phi(\bar{n})-\phi(\bar{m})\|\ge \frac{\lambda}{3}k^{1/p}\text{  for all }\bar n,\bar m\in [\M]^k\text{ with }\bar{n}<\bar{m}. 
\end{equation}
As $\phi([\M]^k)$ is included in $2\lambda k^{1/p} B_X$, \eqref{EqHFCp}  and the fact that $\lambda>12$ imply that   
\begin{align*}
\frac{1}{8}\max\Big\{\|\phi(\bar{n})\|,{\|\phi(\bar{m})\|}\Big\}+1 \le \frac{\lambda k^{1/p}}{4}+1 \le \frac{\lambda k^{1/p}}{3} \le  \|\phi(\bar{n})-\phi(\bar{m})\| 
\end{align*}
for all $\bar n,\bar m\in [\M]^k$ with $\bar{n}<\bar{m}$. Therefore, by \eqref{Eq1ThmPreservationHFCpByLinearExpansion},  we must have
\begin{equation}\label{Eq.HFCp2}
\|(f\circ\phi)(\bar{n})-(f\circ\phi)(\bar{m})\|\ge  \frac{\lambda k^{1/p}}{3L}-L \ge \frac{C k^{1/p}}{3L}-L
\end{equation}
 for all $\bar n,\bar m\in [\M]^k$ with $\bar{n}<\bar{m}$. 

On the other hand, as   $\phi$ is 1-Lipschitz and $[\M]^k$ is a metric graph, our choice of $K$   implies that $\omega_{f\circ \phi}(1)\le 2K$ and therefore that $f\circ \phi$ is $2K$-Lipschitz. As $Y$ has HFC$_p$ with constant $C_Y$, an homogeneity argument implies the existence of an infinite subset $\M'$ of $\M$ satisfying 
\begin{equation}\label{Eq.HFCp3}
\diam(f(\phi([\M']^k)))\le 2KC_Y k^{1/p}.
\end{equation}
Therefore, \eqref{Eq.HFCp2} and \eqref{Eq.HFCp3} imply that
\begin{equation}\label{Eq.HFCp4}\frac{C}{3L}\leq 2KC_Y+L.
\end{equation}
This contradicts our choice of $C$.
\end{proof}

Combining this result and Example \ref{example HFC}, we obtain immediately the following.

\begin{corollary} Let $p,q \in [1,\infty)$ and assume that $p<q$. Then there is no coarse map from $\ell_p$ to $\ell_q$ that is linearly expanding at rate $1$. 
\end{corollary}

\begin{remark} It is important to recall here that if $p>q$, then the Mazur map $M_{p,q}:\ell_p \to \ell_q$ is coarse and linearly expanding at rate $1$ (Theorem \ref{ThmMazurMapACE}). 
\end{remark}

 In case $p=\infty$, a stronger version of Theorem \ref{ThmPreservationHFCpByLinearExpansion} holds. Precisely,  HFC$_\infty$ is preserved by coarse maps which are expanding at rate $1$; no need for linear expansion here. Since the argument is completely analogous, we only indicate the mild differences in the proof below.

 \begin{proof}[Proof of Theorem \ref{ThmPreservationHFCpByLinearExpansion}\eqref{ThmPreservationHFCpByLinearExpansionItem2}] 
The proof follows almost  verbatim  the one of  Theorem \ref{ThmPreservationHFCpByLinearExpansion}\eqref{ThmPreservationHFCpByLinearExpansionItem1}. Precisely, let $f\colon X\to Y$ be a coarse map which is expanding at rate $1$  and $K$ and $C_Y$ be chosen as in the proof of Theorem \ref{ThmPreservationHFCpByLinearExpansion}\eqref{ThmPreservationHFCpByLinearExpansionItem1}. The expansion property of $f$ gives us   $\rho\colon [0,\infty)\to [0,\infty)$ with  $\lim_{t\to \infty}\rho(t)=\infty$ and such that \[\|x-z\|\geq \frac{1}{8}\max\{\|x\|,\|z\|\}+1 \ \text{ implies  }\ \|f(x)-f(z)\|\geq \rho(\|x-z\|).\] 

Since $\lim_{t\to \infty}\rho(t)=\infty$, we can  choose $C>0$ such that $\rho(C /3)>2KC_Y$ and, assuming that $X$ does not have HFC$_\infty$, we obtain $k\in\N$ and a $1$-Lipschitz map $\phi\colon ([\N]^k,d_{\mathbb H})\to X$ such that 
\[\mathrm{diam}(\phi([\M]^k))\geq C\]
for all infinite $\M\subseteq \N$. 
Defining  $\lambda$ as in \eqref{eqlambda} (here $1/\infty=0$) and letting $\M\subseteq \N$ be as in the proof of  Theorem \ref{ThmPreservationHFCpByLinearExpansion}, the proof then proceeds verbatim  until \eqref{Eq.HFCp2} which in this case is replaced  by 
\begin{equation}\label{InftyEq.HFCp2}
\|(f\circ\phi)(\bar{n})-(f\circ\phi)(\bar{m})\|\ge  \rho\Big(\frac{\lambda}{3}\Big) > 2KC_Y
\end{equation}
 for all $\bar n,\bar m\in [\M]^k$ with $\bar{n}<\bar{m}$. Then, \eqref{Eq.HFCp3} becomes
\[
\diam(f(\phi([\M']^k)))\le 2KC_Y.
\] and both those inequalities put together give us a contradiction.
 \end{proof}

\begin{proof}[Proof of Corollary \ref{CorollaryHFCInfty}]
 By  \cite[Theorem B]{BaudierLancienMotakisSchlumprecht2018}, we know that having HFC$_\infty$ is equivalent to being reflexive and asymptotic-$c_0$. The result is then an immediate consequence of     Theorem \ref{ThmPreservationHFCpByLinearExpansion}\eqref{ThmPreservationHFCpByLinearExpansionItem2}.
\end{proof}
\subsection{Interlacing pairs in Hamming graphs}\label{SubsectionInterlacingHamming}

Property HFC$_p$, for $p\in (1,\infty]$, implies reflexivity (this follows from \cite[Theorem 4.1]{BaudierKaltonLancien2010Studia}). Therefore this property is not relevant to study embeddings between non reflexive Banach spaces. With the goal of addressing this problem, the following weakening  of the HFC$_p$ was  introduced in \cite{LancienRaja2017Houston} and formalized in \cite[Subsection 2.5]{Fovelle}.

\begin{definition} Let $\M$ be an infinite subset of $\N$ and let $k\in \N$. The   set of \emph{strictly interlacing} pairs in $[\M]^k$ is given by 
$$I_k(\M)=\{(\bar n,\bar m)\in [\M]^k\times [\M]^k\mid \ n_1< m_1< n_2< m_2< \cdots < n_k < m_k\}.$$
Given a Banach space $X$ and  $p\in (1,\infty]$, we say that   $X$ has the \emph{Hamming interlaced concentration property for $p$}, abbreviated as \emph{HIC}$_p$, if there exists a constant $C\ge 1$ such that for any $k\in \N$ and any $1$-Lipschitz map $\phi\colon ([\N]^k,d_{\mathbb H}) \to X$, there exists an infinite subset $\M$ of $\N$ such that 
$$\|f(\bar n)-f(\bar m)\|\le Ck^{1/p},\ \text{for all}\ (\bar n,\bar m) \in I_k(\M).$$
\end{definition}

\begin{example}\label{ExampleJamesSpace} Recall, given $p\in (1,\infty)$, that  the James sequence space $J_p$ is defined by 
\begin{align*}
J_p=\Big\{(x(n))_n\in \R^\N\mid & \lim_nx(n)=0\ \text{ and }\\ &\|x\|_{J_p}=\sup_{p_1<\ldots<p_n}\Big(\sum_{i=2}^n|x(p_{i})-x(p_{i-1})|^p\Big)^{1/p}<\infty\Big\} .
\end{align*}
So, the classic James   sequence space $J$ is simply $J_2$ and, just as $J$, each $J_p$ has codimension $1$ in its bidual; which makes them  quasi-reflexive Banach spaces. M. Raja and the second named author proved in \cite[Theorem 2.2]{LancienRaja2017Houston} that, for $p\in (1,\infty)$, James space $J_p$ has HIC$_p$ and that, for $q<p$, $J_q$ fails HIC$_p$. 
\end{example}

We can prove the following variant of Theorem \ref{ThmPreservationHFCpByLinearExpansion}. Notice that the assumption on the rate of expansion has been weakened. 

\begin{theorem}\label{ThmInterlacingHamming} Let $X$ and $Y$ be Banach spaces and suppose $Y$ has HIC$_p$ for some $p\in (1,\infty)$. If there is a coarse map $f\colon X\to Y$ which is linearly expanding at rate $\frac1p$, then  $X$ must have HIC$_p$.
\end{theorem}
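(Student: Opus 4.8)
The plan is to mirror the structure of the proof of Theorem \ref{ThmPreservationHFCpByLinearExpansion}\eqref{ThmPreservationHFCpByLinearExpansionItem1}, substituting the concentration property HIC$_p$ for HFC$_p$ and tracking carefully how the weaker rate $\alpha=\frac1p$ interacts with the diameter growth $k^{1/p}$. First I would fix the coarse-Lipschitz control $\omega_f(t)\le Kt+K$ and, from the hypothesis of linear expansion at rate $\frac1p$, a constant $L>1$ with $\|x-z\|\ge\frac18\max\{\|x\|^{1/p},\|z\|^{1/p}\}+1$ implying $\|f(x)-f(z)\|\ge\frac1L\|x-z\|-L$. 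Let $C_Y$ witness HIC$_p$ for $Y$ and choose $C$ large (of the order $KLC_Y+L^2$, the exact threshold to be read off at the end). Assume for contradiction that $X$ fails HIC$_p$: there are $k\in\N$ and a $1$-Lipschitz $\phi\colon([\N]^k,d_{\mathbb H})\to X$ so that for every infinite $\M\subseteq\N$ there is an interlacing pair $(\bar n,\bar m)\in I_k(\M)$ with $\|\phi(\bar n)-\phi(\bar m)\|> Ck^{1/p}$.

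Next I would run the Ramsey/stabilization step to make the large-distance phenomenon uniform over interlacing pairs. Here the combinatorics is slightly different from the HFC case: we do not get a lower bound on the diameter of $\phi([\M]^k)$, only on distances along interlacing pairs, so I cannot freely recenter $\phi$ by the midpoint of a bounded set. The fix is the standard one used for property-$\mathcal Q$-type arguments: pass to an infinite $\M$ on which, by Ramsey applied to the coloring of $I_k(\M)$ by whether $\|\phi(\bar n)-\phi(\bar m)\|> Ck^{1/p}$, every interlacing pair in $[\M]^k$ is ``large''; then, since the Hamming-diameter of $[\M]^k$ can still be unbounded, control the \emph{norms} of the relevant points by observing that $\phi$ is $1$-Lipschitz and that any two elements of $[\M]^k$ differ in at most $k$ coordinates, so after translating by $\phi(\bar n_0)$ for a fixed $\bar n_0\in[\M]^k$ we may assume $\|\phi(\bar n)\|\le k$ for all $\bar n\in[\M]^k$ appearing in an interlacing pair with $\bar n_0$ — and then, enlarging the fixed base set within a further Ramsey step, for all $\bar n\in[\M]^k$. (This is exactly the point where $\alpha=\frac1p$ is tailored to the problem: we need $\frac18\|\phi(\bar n)\|^{1/p}+1\lesssim k^{1/p}$, and $\|\phi(\bar n)\|\le k$ gives $\|\phi(\bar n)\|^{1/p}\le k^{1/p}$, so the expansion hypothesis applies to the pair $(\phi(\bar n),\phi(\bar m))$.) Consequently $\|(f\circ\phi)(\bar n)-(f\circ\phi)(\bar m)\|\ge \frac1L\,Ck^{1/p}-L$ for all $(\bar n,\bar m)\in I_k(\M)$.

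On the other side, $f\circ\phi$ is $2K$-Lipschitz on $([\N]^k,d_{\mathbb H})$, so applying HIC$_p$ for $Y$ (with a homogeneity rescaling by $2K$) yields an infinite $\M'\subseteq\M$ with $\|(f\circ\phi)(\bar n)-(f\circ\phi)(\bar m)\|\le 2KC_Y k^{1/p}$ for all $(\bar n,\bar m)\in I_k(\M')$. Comparing the two bounds on an interlacing pair in $I_k(\M')$ forces $\frac{C}{L}k^{1/p}-L\le 2KC_Yk^{1/p}$, hence (using $k\ge 1$, so $k^{1/p}\ge 1$) $\frac{C}{L}-L\le 2KC_Y$, i.e. $C\le 2KLC_Y+L^2$, contradicting the choice of $C$. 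This completes the proof.

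The main obstacle I anticipate is the norm-control step in the middle paragraph: unlike in the HFC argument, there is no bounded set containing $\phi([\M]^k)$ to recenter against, so one must verify that fixing a base point and using the Hamming-Lipschitz bound $\|\phi(\bar n)-\phi(\bar n_0)\|\le d_{\mathbb H}(\bar n,\bar n_0)\le k$ genuinely yields $\|\phi(\bar n)\|\le k$ uniformly (after translation) for every $\bar n$ that will occur in an interlacing pair — and that this is compatible with the Ramsey stabilization of the ``large pair'' coloring. Everything else is a routine transcription of the HFC$_p$ proof with $\max\{\|x\|,\|z\|\}$ replaced by $\max\{\|x\|^{1/p},\|z\|^{1/p}\}$ and ``$\diam$'' replaced by ``distance along interlacing pairs''.
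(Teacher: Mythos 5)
Your proposal is correct and follows essentially the same route as the paper's proof: after translating so that $\phi([\N]^k)\subseteq kB_X$ (which works because the Hamming diameter of $[\N]^k$ is $k$, and is precisely what makes the rate-$\frac1p$ expansion applicable to interlacing pairs separated by at least $Ck^{1/p}$), you Ramsey-stabilize the large interlacing pairs and play the lower bound from linear expansion against the upper bound from HIC$_p$ applied to the $2K$-Lipschitz map $f\circ\phi$. The only cosmetic differences are that you stabilize the coloring directly at level $Ck^{1/p}$ rather than via the paper's $\lambda$-infimum normalization, and the ``further Ramsey step'' you worry about for norm control is unnecessary, since $d_{\mathbb H}(\bar n,\bar n_0)\le k$ for every $\bar n\in[\N]^k$ already yields $\|\phi(\bar n)\|\le k$ after a single translation.
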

 
\begin{proof} The proof is  similar to the one of Theorem \ref{ThmPreservationHFCpByLinearExpansion}, but we need to detail some of the minor modifications. So, let $f\colon X\to Y$ be as in the statement  and $C_Y\ge 1$ be such that $Y$ has HIC$_p$ with constant $C_Y$. There exists $K\ge 1$ so that $\omega_f(t)\le Kt+K$ and $L\ge 1$ such that 
\begin{equation}\label{Eq2ThmPreservationHFCpByLinearExpansion}\|x-z\|\geq \max\{\|x\|^{1/p},\|z\|^{1/p}\}+1\ \text{ implies  }\ \|f(x)-f(z)\|\geq \frac{1}{L}\|x-z\|-L.
\end{equation}
Assume that $X$ fails HIC$_p$ and fix $C>3$ (to be precised later). Then there exist $k\in \N$ and a $1$-Lipschitz map $\phi\colon ([\N]^k,d_{\mathbb H})\to X$  such that  for any infinite $\M\subseteq \N$ there exists $(\bar n,\bar m) \in I_k(\M)$ so that $\|\phi(\bar n)-\phi(\bar m)\|> Ck^{1/p}$. Since $\Phi$ is 1-Lipschitz and $\diam([\N]^k)=k$, we may assume that $\phi([\N]^k)\subseteq kB_X$. Let 
\begin{equation}\label{eqlambdabis}
\lambda=\inf\Big\{ \sup_{(\bar n,\bar m)\in I_k(\M)}\|\phi(\bar n)-\phi(\bar m)\|k^{-1/p}\mid\ \M\subseteq\N \text{ and }|\M|=\infty\Big\}.
\end{equation}  
So, $\lambda\ge C$. Pick now an infinite $\M\subseteq \N$ such that 
\[\lambda \le \sup_{(\bar n,\bar m)\in I_k(\M)}\|\phi(\bar n)-\phi(\bar m)\|k^{-1/p} < 2\lambda.\] 
By the definition of $\lambda$, for all infinite $\D\subseteq \M$, there exists $(\bar n,\bar m)\in I_k(\D)$ such that $\|\phi(\bar n)-\phi(\bar m)\|> \frac23 \lambda k^{1/p}$. Identifying $I_k(\D)$ with $[\D]^{2k}$ and using Ramsey's theorem,  we can therefore assume that $\|\phi(\bar n)-\phi(\bar m)\|> \frac23 \lambda k^{1/p}$ for all $(\bar n,\bar m) \in I_k(\M)$. It follows that for all $(\bar n,\bar m) \in I_k(\M)$:
\begin{align*}
\|\phi(\bar n)-\phi(\bar m)\|\ge  \frac{2C}{3} k^{1/p} \ge \frac{C}{3} k^{1/p}+1 \ge \max\{\|\phi(\bar n)\|^{1/p},\|\phi(\bar m)\|^{1/p}\}+1.
\end{align*}
It then follows from (\ref{Eq2ThmPreservationHFCpByLinearExpansion}) that 
$$\|(f \circ \phi)(\bar n)-(f \circ \phi)(\bar m)\|\ge  \frac{2C}{3L}k^{1/p}-L,$$
for all $(\bar n,\bar m) \in I_k(\M)$.

On the other hand $f \circ \phi$ is $2K$-Lipschitz and $Y$ has property HIC$_p$ with constant $C_Y$, so there exists an infinite subset $\M'$ of $\M$ so that $\|(f \circ \phi)(\bar n)-(f \circ \phi)(\bar m)\|\le 2KC_Yk^{1/p}$, for all  $(\bar n,\bar m) \in I_k(\M')$. This yields a contradiction for an initial large enough choice of  $C$ (depending on $K,L,C_Y$).
\end{proof}

\begin{remark} In the case $p=\infty$, the analogous statement is that HIC$_\infty$ is preserved by coarse maps expanding at rate $0$, in other words by coarse embeddings. This was already noticed in \cite{Fovelle}. 
\end{remark}

\begin{problem}\label{ProblemHICpRate1}
Let $p\in (1,\infty]$. We do not know if the property HIC$_p$ is preserved by coarse maps that are linearly expanding at rate 1.
\end{problem}

As an immediate application of Theorem \ref{ThmInterlacingHamming} and Example \ref{ExampleJamesSpace}, we get the following. 

\begin{corollary}\label{CorInterlacingHammingJames} Let $p,q \in [1,\infty)$ and assume that $p<q$. Then there is no coarse map from $J_p$ to $J_q$ that is linearly expanding at rate $1/q$. \qed
\end{corollary}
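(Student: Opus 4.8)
The plan is to argue by contradiction and simply feed Theorem \ref{ThmInterlacingHamming} with the James spaces $J_p$ and $J_q$ into Example \ref{ExampleJamesSpace}. Suppose there were a coarse map $f\colon J_p\to J_q$ which is linearly expanding at rate $1/q$. Since $p\ge 1$ and $p<q$, we have $q\in(1,\infty)$, so Example \ref{ExampleJamesSpace} (the positive half of \cite[Theorem 2.2]{LancienRaja2017Houston}) tells us that $J_q$ has HIC$_q$. Thus we are exactly in the situation of Theorem \ref{ThmInterlacingHamming}, with the parameter ``$p$'' of that theorem taken to be our $q$: the target $J_q$ has HIC$_q$ and $f$ is coarse and linearly expanding at rate $1/q$.

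Applying Theorem \ref{ThmInterlacingHamming} then forces $J_p$ to have HIC$_q$. On the other hand, since $p<q$ and $q\in(1,\infty)$, the negative half of Example \ref{ExampleJamesSpace} (again \cite[Theorem 2.2]{LancienRaja2017Houston}, which asserts that $J_a$ fails HIC$_b$ whenever $a<b$ with $b\in(1,\infty)$, with $a$ allowed to equal $1$) says that $J_p$ fails HIC$_q$. This is the desired contradiction, so no such $f$ exists.

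There is essentially no obstacle here: the corollary is a direct specialization, and indeed it is the reason Theorem \ref{ThmInterlacingHamming} was stated with a flexible rate $1/p$ rather than rate $1$. The only point to check is bookkeeping of indices — that the ``$p$'' appearing in the hypothesis of Theorem \ref{ThmInterlacingHamming} is matched with $q$ here, and that the constraint $q\in(1,\infty)$ needed to invoke Example \ref{ExampleJamesSpace} is automatic from $1\le p<q<\infty$. No separate treatment of the endpoint $p=1$ is required, since the failure clause of Example \ref{ExampleJamesSpace} already covers $J_1$.

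\begin{proof}
Assume, towards a contradiction, that $f\colon J_p\to J_q$ is a coarse map which is linearly expanding at rate $1/q$. Since $1\le p<q$, we have $q\in(1,\infty)$, so by Example \ref{ExampleJamesSpace} the space $J_q$ has HIC$_q$. Theorem \ref{ThmInterlacingHamming}, applied with its parameter equal to $q$, now yields that $J_p$ has HIC$_q$ as well. However, since $p<q$ and $q\in(1,\infty)$, Example \ref{ExampleJamesSpace} also asserts that $J_p$ fails HIC$_q$. This contradiction completes the proof.
\end{proof}
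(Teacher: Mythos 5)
Your proposal is correct and is exactly the argument the paper intends: the corollary is stated with a \qed precisely because it follows immediately by applying Theorem \ref{ThmInterlacingHamming} with its parameter set to $q$, using Example \ref{ExampleJamesSpace} to get that $J_q$ has HIC$_q$ while $J_p$ fails it. Your index bookkeeping (matching the theorem's ``$p$'' with $q$ here) is the only point of care, and you handled it as the paper does.
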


\subsection{Interlacing graphs}
We now deal with the interlacing metric of $[\N]^k$, which was introduced by N. Kalton in \cite{Kalton2007} to rule out the coarse embeddability of $c_0$ into reflexive spaces. We start by recalling the definition of the interlaced metric and of property $\mathcal Q_p$. 

\begin{definition}
Let $k\in\N$.  The  \emph{interlacing  metric} $d_{\mathbb I}=d_{\mathbb I,k}$ on $[\N]^k$ is defined as follows: we endow $[\N]^k$ with a graph structure by letting  distinct $\bar n=(n_1,\ldots, n_k),\bar m=(m_1,\ldots, m_k)\in [\N]^k$ be adjacent if either 
\[n_1\le m_1\le \ldots \le n_k\le m_k\ \text{ or }\ m_1\le n_1\le \ldots\le m_k\le n_k.\]
The metric $d_{\mathbb I}$ is then the shortest path metric on $[\N]^k$ with respect to this graph structure. 
\end{definition}

 \begin{definition}[Definition \ref{DefiPropQ}] Let $p\in (1,\infty]$. We say that a Banach space $X$ has property $\mathcal Q_p$ if there exists a constant $C\ge 1$ such that for any $k\in \N$ and any $1$-Lipschitz map $\phi\colon ([\N]^k,d_{\mathbb I}) \to X$, there exists an infinite subset $\M$ of $\N$ such that \[\diam(\phi([\N]^k))\le Ck^{1/p}.\] 
(here if $p=\infty$, we use the convention $1/\infty=0)$. If $p=\infty$, we simply say, following \cite{Kalton2007}, that $X$ has \emph{property $\mathcal Q$}.
\end{definition}

\begin{example} N. Kalton proved in \cite[Corollary 4.3]{Kalton2007} that reflexive Banach spaces have property $\mathcal Q$. In \cite{BragaLancienPetitjeanProchazka2023JTopAna}, it is shown that, for $p\in (1,\infty)$, a dual of a $p$-AUS Banach space has $\mathcal Q_{p'}$, where $p'$ is the conjugate of $p$ and that the dual of an asymptotic-$c_0$ space has $\mathcal Q$ (\cite[Theorem 4.1]{BragaLancienPetitjeanProchazka2023JTopAna}). 
\end{example}

We now  extend the result  insuring the stability of property $\mathcal Q_p$ under coarse Lipschitz embeddings.

 \begin{proof}[Proof of Theorem \ref{ThmPreservationPropertyQpByLinearExpansion}\eqref{ThmPreservationPropertyQpByLinearExpansionItem1}] Again the proof follows the lines of the argument for Theorem \ref{ThmPreservationHFCpByLinearExpansion}\eqref{ThmPreservationHFCpByLinearExpansionItem1}. let $f\colon X\to Y$ be as in the statement and $C_Y$ be such that $Y$ has $\mathcal Q_p$ with constant $C_Y$. Then the constants $K,L,C$ are defined as in the proof of Theorem \ref{ThmPreservationHFCpByLinearExpansion}\eqref{ThmPreservationHFCpByLinearExpansionItem1} and assume that $X$ fails $\mathcal Q_p$. Then there exist $k\in \N$ and a $1$-Lipschitz map $\phi\colon ([\N]^k,d_{\mathbb I})\to X$  such that  for any infinite $\M\subseteq \N$, 
  \[\mathrm{diam}(\phi([\M]^k))\geq C\] 
Let 
\begin{equation}
\lambda=\inf\{\diam(\phi([\M]^k))k^{-1/p}\mid \M\subseteq\N \text{ and }|\M|=\infty\};
\end{equation}  and we pick an infinite $\M\subseteq \N$ such that 
\[\lambda \le \diam(\phi([\M]^k))k^{-1/p} < 2\lambda.\] So assume, as we may, that $\phi([\M]^k) \subseteq 2\lambda k^{1/p}B_X$. By the definition of $\lambda$ and arguing as in the proof of Theorem \ref{ThmPreservationHFCpByLinearExpansion}\eqref{ThmPreservationHFCpByLinearExpansionItem1}, we may assume that for all $\bar n < \bar m \in [\M]^k$, $\|\phi(\bar{n})-\phi(\bar{m})\|\ge \frac{\lambda}{3}k^{1/p}$. As $\phi([\M]^k)$ is included in $2\lambda k^{1/p}\cdot B_X$, we deduce similarly that 
\begin{equation}
\|(f\circ\phi)(\bar{n})-(f\circ\phi)(\bar{m})\|\ge  \frac{\lambda k^{1/p}}{3L}-L \ge \frac{C k^{1/p}}{3L}-L
\end{equation}
 for all $\bar n,\bar m\in [\M]^k$ with $\bar{n}<\bar{m}$. The contradiction then follows exactly as in the proof of Theorem \ref{ThmPreservationHFCpByLinearExpansion}. 
\end{proof}

\begin{proof}[Proof of Theorem \ref{ThmPreservationPropertyQpByLinearExpansion}\eqref{ThmPreservationPropertyQpByLinearExpansionItem2}]
We just have to adapt similarly the proof of Theorem \ref{ThmPreservationHFCpByLinearExpansion}\eqref{ThmPreservationHFCpByLinearExpansionItem2}. We leave the details to the reader.
\end{proof}

Asymptotic uniform convexity is often used, together with the approximate midpoint principle to find obstructions to the coarse Lipschitz embeddability. This is how it is shown that $\ell_p$ does not coarse Lipschitz embed into $\ell_q$ for $p>q$ (see \cite{JohnsonLindenstraussSchechtman1996GAFA}).  We cannot find an approximate midpoint principle for coarse maps that are linearly expanding at rate $1$ and, as we already emphasized, there is a good reason for that: it follows from  Theorem  \ref{ThmMazurMapACE} that $M_{p,q}:\ell_p \to \ell_q$ is a coarse map from that is linearly expanding at rate $1$ when $p>q$. However, the situation is different for the family of James spaces $(J_p)_p$ and the use of property $Q$ and its variants can serve as a substitute to obtain some preservation of asymptotic uniform convexity. 

\begin{corollary} Let $p,q \in (1,\infty)$ and assume that $p>q$. Then there is no coarse map from $J_p$ to $J_q$ that is linearly expanding at rate $1$.
\end{corollary}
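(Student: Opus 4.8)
The plan is to deduce the statement from Theorem~\ref{ThmPreservationPropertyQpByLinearExpansion}\eqref{ThmPreservationPropertyQpByLinearExpansionItem1}, that is, from the preservation of property $\mathcal{Q}_r$ under coarse maps which are linearly expanding at rate $1$, in exact parallel with the way Corollary~\ref{CorInterlacingHammingJames} was obtained from the HIC$_p$-preservation of Theorem~\ref{ThmInterlacingHamming}. The skeleton is then a one-line contradiction: assuming there is a coarse map $f\colon J_p\to J_q$ which is linearly expanding at rate $1$, pick the appropriate index $r\in(1,\infty)$; since $J_q$ has property $\mathcal{Q}_r$, Theorem~\ref{ThmPreservationPropertyQpByLinearExpansion}\eqref{ThmPreservationPropertyQpByLinearExpansionItem1} forces $J_p$ to have property $\mathcal{Q}_r$ as well; and this contradicts the fact that $J_p$ does \emph{not} have property $\mathcal{Q}_r$.

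So the whole content sits in two facts about the James spaces and property $\mathcal{Q}_r$ on the interlaced graphs, which is where I expect the work to be: $(a)$ $J_q$ has property $\mathcal{Q}_r$, and $(b)$ $J_p$ fails property $\mathcal{Q}_r$, for a suitable index $r=r(p,q)$ read off from $p$ and $q$ with $p>q$. Fact $(a)$ should come from the asymptotic geometry of $J_q$: it is $q$-asymptotically uniformly convex and, being quasi-reflexive, is isomorphic to its bidual, hence to the dual of $J_q^*$; by duality of the asymptotic moduli $J_q^*$ is asymptotically uniformly smooth of the corresponding power type, so the result of \cite{BragaLancienPetitjeanProchazka2023JTopAna} that the dual of a $p$-AUS space has $\mathcal{Q}_{p'}$ produces the required concentration property for $J_q$. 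Fact $(b)$ is the sharpness of this phenomenon along the scale $(J_t)_t$ — the same circle of ideas as in \cite{LancienRaja2017Houston,BragaLancienPetitjeanProchazka2023JTopAna}, where one shows that $J_t$ fails HIC$_s$ whenever $t<s$ — now applied to $J_p$: since $p>q$, the asymptotic $\ell_p$-behaviour of $J_p$ forces the $1$-Lipschitz maps on the interlaced graphs which witness it to have images whose diameters grow strictly faster than the rate allowed by $\mathcal{Q}_r$.

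The main obstacle I anticipate is precisely the bookkeeping behind $(a)$ and $(b)$: identifying the index $r$ so that $J_q$ falls on the concentration side and $J_p$ on the non-concentration side, and then verifying both statements with that $r$ by combining the $q$-asymptotic uniform convexity of $J_q$, the matching asymptotic uniform smoothness of its predual $J_q^*$, and the known rigidity of the family $(J_t)_t$ with respect to concentration on interlaced graphs. One should also observe that, unlike for the $\ell_p$'s, there is no approximate-midpoint substitute available here — by Theorem~\ref{ThmMazurMapACE} the Mazur map $M_{p,q}\colon\ell_p\to\ell_q$ is already a coarse map which is linearly expanding at rate $1$ when $p>q$ — which is exactly why the argument must be routed through property $\mathcal{Q}_r$.
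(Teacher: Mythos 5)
Your skeleton is exactly the paper's: assume such a map exists, use Theorem \ref{ThmPreservationPropertyQpByLinearExpansion} to transfer property $\mathcal Q_r$ from $J_q$ back to $J_p$, and contradict a non-concentration fact about $J_p$ (and your instinct to invoke item \eqref{ThmPreservationPropertyQpByLinearExpansionItem1} is right, since the relevant index is finite). However, the whole quantitative content of the corollary is the choice of $r$ together with the two James-space facts, and this is precisely what you leave open; moreover your sketched derivation of fact $(a)$ points at the wrong index. The paper settles both facts by citation: by \cite[Corollaries 3.4 and 3.8]{BragaLancienPetitjeanProchazka2023JTopAna}, for every $t\in(1,\infty)$ the space $J_t$ has property $\mathcal Q_{t'}$ ($t'$ the conjugate exponent) and fails $\mathcal Q_r$ for all larger $r$. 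The correct choice is $r=q'$: $J_q$ has $\mathcal Q_{q'}$, so $J_p$ would have $\mathcal Q_{q'}$, while $p>q$ gives $q'>p'$, so $J_p$ fails $\mathcal Q_{q'}$ --- contradiction.

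The step that would fail as you describe it is the duality route to $(a)$. Starting from ``$J_q$ is $q$-AUC and $J_q\simeq (J_q^*)^*$'' and dualizing the asymptotic moduli gives $J_q^*$ asymptotically uniformly smooth with the \emph{conjugate} power type $q'$, and then the theorem ``the dual of an $s$-AUS space has $\mathcal Q_{s'}$'' yields $\mathcal Q_{(q')'}=\mathcal Q_q$ for $J_q$, not $\mathcal Q_{q'}$. That conclusion is insufficient for the contradiction (you would then need $J_p$ to fail $\mathcal Q_q$, which does not follow when $q\le p'$), and it is in fact false for $q>2$: the maps $\bar n\mapsto k^{-1/q}\sum_{i=1}^k 1_{[1,n_i]}$ are $2$-Lipschitz on $([\N]^k,d_{\mathbb I})$, yet for $\bar n,\bar m$ with $n_k<m_1$ the difference takes the value $k$ at some coordinate, so its norm is at least $k^{1/q'}$ after normalization; since such pairs exist in $[\M]^k$ for every infinite $\M$, $J_q$ fails $\mathcal Q_r$ for every $r>q'$, in particular $\mathcal Q_q$ when $q>2$. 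So either quote the two BLPP corollaries as the paper does, or, if you insist on re-deriving $(a)$, you need a $q$-AUS(-able) predual of $J_q$, which is not what the AUC-to-AUS duality you invoke produces. (Incidentally, the paper's proof as printed cites item \eqref{ThmPreservationPropertyQpByLinearExpansionItem2} and states failure of $\mathcal Q_r$ for $r>p$; the argument really uses item \eqref{ThmPreservationPropertyQpByLinearExpansionItem1} and failure for $r>p'$, so on those two points your reading is the correct one.)
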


\begin{proof} It is proved by the authors, C. Petitjean and A. Proch\'azka in \cite[Corollaries 3.4 and 3.8]{BragaLancienPetitjeanProchazka2023JTopAna}  that, for $p\in (1,\infty)$, $J_p$ has property $\mathcal Q_{p'}$, where $p'$ is the conjugate exponent of $p$, but fails $\mathcal Q_{r}$ for all $r>p$. Then the conclusion follows from Theorem \ref{ThmPreservationPropertyQpByLinearExpansion}\eqref{ThmPreservationPropertyQpByLinearExpansionItem2}.
\end{proof}


\section{Embeddings into $\ell_\infty$}\label{SectionlInfty}
 In this last section, we prove that Lipschitz embeddability into $\ell_\infty $ is equivalent to the existence of a coarse map which is linearly expanding at some rate strictly smaller than $1$. We start with an intermediate result.  
 
\begin{proposition}\label{PropositionlInfty}
Let $\alpha\in (0,1)$. Let $X$ be a Banach space and suppose there is a Lipschitz map $f\colon X\to \ell_\infty$ which is also  linearly  expanding at rate  $\alpha$. Then $X$ Lipschitzly embeds into $\ell_\infty$. 
\end{proposition}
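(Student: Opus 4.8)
The plan is to upgrade a Lipschitz map $f\colon X\to\ell_\infty$ that is merely linearly expanding at rate $\alpha\in(0,1)$ into an honest Lipschitz embedding by adding finitely many (or countably many) extra coordinates that handle the ``small'' and ``near the origin'' scales that the expansion hypothesis does not control. Recall that $f$ being linearly expanding at rate $\alpha$ gives, for each $L>0$, a constant $C=C(L)$ with
\[
\|x-z\|\geq L\max\{\|x\|^\alpha,\|z\|^\alpha\}+L\ \Longrightarrow\ \|f(x)-f(z)\|_\infty\geq \tfrac{1}{C}\|x-z\|-C.
\]
So on pairs $x,z$ that are far apart \emph{relative to their norms}, $f$ already behaves like a Lipschitz embedding (after absorbing the additive constant, e.g.\ by passing to pairs with $\|x-z\|$ large, or by a standard rescaling/doubling trick). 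The only pairs not handled are those with $\|x-z\|$ comparatively small against $\max\{\|x\|,\|z\|\}^\alpha$; since $\alpha<1$, for such a pair both points must lie relatively far from the origin, and locally (on a ball of radius comparable to $\|x\|$) the geometry is that of a bounded metric space.

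First I would reduce to controlling, for each dyadic annulus $A_n=\{x: 2^n\le\|x\|<2^{n+1}\}$, the pairs $x,z\in A_n$ with $\|x-z\|$ small. On such an annulus the ``bad'' pairs satisfy $\|x-z\|\lesssim 2^{n\alpha}$, a scale that is a factor $2^{n(1-\alpha)}$ smaller than the radius. The classical fact that every separable metric space (in particular every bounded subset of $X$, being separable since $X$ is — or working with the relevant separable subspace) embeds isometrically, hence Lipschitzly, into $\ell_\infty$ via a Kuratowski/Fréchet-type map $x\mapsto(d(x,a_i))_i$ for a countable dense set $(a_i)$, lets me build for each $n$ a $1$-Lipschitz map $g_n\colon X\to\ell_\infty$ which, restricted to (a suitable enlargement of) $A_n$, is an isometry, and which I then truncate/scale so that $\|g_n\|\le 1$ everywhere. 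The point is that $g_n$ sees all pairs inside $A_n$, including the ones $f$ misses, while being globally Lipschitz.

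Next I would assemble the final embedding as $F(x)=\big(f(x),\ (\theta_n\, g_n(x))_{n\in\Z}\big)$, viewed inside $\ell_\infty\oplus_\infty\ell_\infty(\Z;\ell_\infty)\cong\ell_\infty$, with weights $\theta_n>0$ chosen so that $\sum$-type control is unnecessary — since we are in $\ell_\infty$ we only need $\sup_n\theta_n<\infty$ for $F$ to be Lipschitz, which is automatic if the $\theta_n$ are bounded. Lipschitzness of $F$ is then immediate from Lipschitzness of $f$ and of each $g_n$. For the lower bound: given $x,z$, either they are ``far relative to their norms'', in which case the $f$-coordinate already gives $\|F(x)-F(z)\|_\infty\ge\|f(x)-f(z)\|_\infty\gtrsim\|x-z\|$ (modulo the additive constant, handled by a separate elementary argument for bounded $\|x-z\|$, or by noting a Lipschitz embedding only needs a lower bound of the form $\tfrac1C\|x-z\|-C$ and then observing such a map can be promoted to a genuine bi-Lipschitz embedding into $\ell_\infty$ by the same coordinate-adding device applied once more on a bounded scale); or $\|x-z\|$ is small relative to $\max\{\|x\|,\|z\|\}^\alpha$, forcing $x$ and $z$ into a common annulus $A_n$ (or two adjacent ones), where the coordinate $g_n$ contributes $\theta_n\|x-z\|$. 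Choosing the $\theta_n$ to decay slowly enough that on annulus $A_n$ the quantity $\theta_n$ still dominates the additive slack but $\sup_n\theta_n<\infty$ closes the estimate.

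The main obstacle I anticipate is the bookkeeping at the interface between the two regimes — precisely matching the scale $L\max\{\|x\|^\alpha,\|z\|^\alpha\}+L$ in the hypothesis with the annulus decomposition, and handling pairs straddling the boundary between $A_n$ and $A_{n+1}$ (or pairs where one point is near $0$). One must choose $L$ in the expansion hypothesis appropriately (and possibly let $L$ vary with $n$, since the hypothesis is quantified ``for all $L>0$''), and verify that the additive constant $C(L)$ does not blow up faster than the weights $\theta_n$ can absorb; here the strict inequality $\alpha<1$ is what provides the necessary room, because the gap $\|x-z\|$ versus $2^{n\alpha}$ grows geometrically. A secondary technical point is checking separability so that the Kuratowski-type maps $g_n$ land in $\ell_\infty$ (rather than a nonseparable $\ell_\infty(\Gamma)$) — but $\ell_\infty$ itself is the target space in the statement, so one restricts attention to the separable subspace generated by the relevant annuli, or simply notes that each bounded piece is separable and that a countable union of countable dense sets suffices.
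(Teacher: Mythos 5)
Your scheme has a genuine gap at the point where you invoke Kuratowski/Fr\'echet-type maps $g_n$: to get a $1$-Lipschitz map of an (enlarged) annulus $A_n$ into $\ell_\infty$, isometric at the relevant scales, you need a \emph{countable} dense set in $A_n$, i.e.\ separability of bounded subsets of $X$. The proposition makes no separability assumption, and bounded subsets of a nonseparable Banach space are never separable (think of the unit ball of $\ell_\infty$ or of $\ell_2(\Gamma)$). Your two suggested repairs do not work: ``each bounded piece is separable'' is simply false in general, and ``the separable subspace generated by the relevant annuli'' is all of $X$. Worse, if the density character of $X$ exceeds the continuum, then no bounded subset of $X$ Lipschitz embeds into $\ell_\infty$ at all (bi-Lipschitz maps preserve density character, and subsets of $\ell_\infty$ have density at most $\mathfrak{c}$), so the coordinates $g_n$ you want cannot exist, and nothing in the hypothesis lets you rule such $X$ out a priori. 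The same unavailable ingredient is hidden in your fallback for absorbing the additive constant $-L$ (``apply the same coordinate-adding device once more on a bounded scale''). So as written your argument only proves the statement for spaces whose bounded subsets already Lipschitz embed into $\ell_\infty$ --- e.g.\ separable $X$, for which your annulus decomposition with suitably truncated Kuratowski maps can indeed be carried out --- which is a strictly weaker result.

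The missing idea, and the route the paper takes, is a rescaling/homogenization trick that uses only $f$ itself to see the small scales: define $F\colon X\to \ell_\infty(\Q_+,\ell_\infty)\cong\ell_\infty$ by $F(x)=(q^{-1}f(qx))_{q\in\Q_+}$. This is Lipschitz with $\Lip(F)\le\Lip(f)$, and given $x\neq z$ one applies the expansion hypothesis to the blown-up pair $\frac{tx}{\|x-z\|},\frac{tz}{\|x-z\|}$: since $\alpha<1$, for $t$ large the gap $t$ dominates $L\bigl(\max\{\|tx\|,\|tz\|\}/\|x-z\|\bigr)^{\alpha}+L$ and also $t/L-L\ge t/(2L)$, so the coordinate $q=t/\|x-z\|\in\Q_+$ gives $\|F(x)-F(z)\|\ge\|x-z\|/(2L)$. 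This handles the ``bad'' pairs and the additive constant in one stroke, needs no decomposition into annuli, and --- crucially --- requires no separability, because no new embedding of bounded sets is ever constructed. If you want to salvage your approach, you would have to first prove that the hypothesis forces bounded subsets of $X$ to embed into $\ell_\infty$, and the natural way to do that is precisely this rescaling argument, at which point the auxiliary coordinates become unnecessary.
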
 

\begin{proof}
Fix $L>0$ such that, for all $x,z \in X$,  
 \[\|x-z\|\geq L(\max\{\|x\| ,\|z\|\})^\alpha +L\ \text{ implies }\  \|f(x)-f(z)\|\geq \frac{1}{L}\|x-z\|-L.\]
Denote $\Q_+$ the set of positive rational numbers and define a map $F\colon X\to \ell_\infty(\Q_+,\ell_\infty)$ by letting \[F(x)=(q^{-1}f(qx))_{q\in  \Q_{+}},\ \ x\in X.\]  
Since, 
\[\|F(x)-F(z)\|=\sup_{q\in \Q_{+}}q^{-1}\|f(qx)-f(qz)\|\leq \Lip(f)\|x-z\|,\]
for all $x,z \in X$, we have that $\Lip(F)\leq \Lip(f)$. 

\noindent Fix now $x,z\in X$ with $x\neq z$. As  $\alpha\in (0,1)$, there is $t>0$ large enough so that 
\[\Big\|\frac{tx}{\|x-z\|}-\frac{tz}{\|x-z\|}\Big\|=t\geq L \Big( \max\Big\{\frac{\|tx\|}{\|x-z\|},\frac{\|tz\|}{\|x-z\|}\Big\}\Big)^\alpha+L.\] Taking an even larger $t$ if necessary, we can also assume that $L<\frac{t}{2L}$. We may also assume that $q=\frac{t}{\|x-z\|} \in \Q_+$. We obtain that 
\begin{align*}
\|F(x)-F(z)\|&\geq \frac{\|x-z\|}{t}\Big\|f\Big(\frac{tx}{\|x-z\|}\Big)-f\Big(\frac{tz}{\|x-z\|}\Big)\Big\|\\ 
&\geq \frac{\|x-z\|}{t}\Big(\frac{t}{L}-L\Big)\geq \frac{1}{2L}\|x-z\|   
\end{align*}
As $x$ and $z$ were arbitrary, this shows that $F$ is a Lipschitz embedding from $X$ into  $\ell_\infty(\Q_+,\ell_\infty)$, which is clearly isometric to $\ell_\infty$.
\end{proof}

 \begin{proof}[Proof of Theorem \ref{ThmlInfty}]
 The implication \eqref{ThmlInftyItem1}$\Rightarrow$\eqref{ThmlInftyItem2} is immediate. Suppose \eqref{ThmlInftyItem2} holds and let $f\colon X\to \ell_\infty$ be such a map.  Let $N\subseteq X$ be a net of $X$, i.e., for some $\delta,\eps>0$ the set $N$ is $\delta$-separated and $\eps$-dense in $X$. Since $f$ is coarse and $X$ is metrically convex, as we have already seen (cf \cite[Lemma 1.4]{KaltonSurvey}), $f$ is coarse-Lipschitz, in fact, we have that 
 \[\|f(x)-f(z)\|\leq \omega_f(1)\|x-z\|+\omega_f(1)\]
 for all $x,z\in X$. Therefore, as $N$ is $\delta$-separated, $f\restriction N$, the restriction of $f$ to $N$, is Lipschitz with $\Lip(f\restriction N)\leq \omega_f(1)(1+1/\delta)=C$.  Since any Lipschitz map into $\ell_\infty$ can be extended to larger subsets without increasing its Lipschitz constant (see \cite[Section 3.3]{KaltonSurvey}), there is a Lipschitz map $h\colon X\to \ell_\infty$ such that $h\restriction N=f\restriction N$ and $\Lip(h)=C$. As $N$ is $\eps$-dense in $X$ and $f$ and $h$ coincide on $N$, we easily deduce that $\|f-h\|\le C\eps + \omega_f(\eps)$ on $X$. It then follows that $h$ is also linearly expanding at rate  $\alpha$. Then, by  Proposition \ref{PropositionlInfty},  there exists a Lipschitz embedding from $X$ into $\ell_\infty$ and \eqref{ThmlInftyItem1} follows. 
 \end{proof}
 
 \begin{problem}
 We do not know whether we can take $\alpha=1$ in the statement of Theorem \ref{ThmlInfty}. 
 \end{problem}

 \begin{problem} It is proved in \cite[Theorem 5.3]{Kalton2011FundMath} that the Lipschitz embeddability of a Banach space $X$ into $\ell_\infty$ is in fact equivalent to its coarse embeddability. We do not know whether it is also equivalent to the existence of coarse map from $X$ to $\ell_\infty$ that is expanding at some nontrivial rate $\alpha \in (0,1]$; notice that  Theorem \ref{ThmlInfty} assumes \emph{linear} expansion at some rate $\alpha\in (0,1)$.   
 \end{problem}

\end{document}